\newcommand{\Q}{{\mathbb Q}}
\newcommand{\Qbar}{{\overline{\Q}}}
\newcommand{\Z}{{\mathbb Z}}
\newcommand{\FF}{{\mathbb F}}
\newcommand{\Gal}{{\rm Gal}}
\newcommand{\GL}{{\rm GL}}
\newcommand{\ndivides}{%
  \mathrel{\mkern.5mu 
    \ooalign{\hidewidth$\big|$\hidewidth\cr$\nmid$\cr}%
  }%
}
\newtheorem{theorem}{Theorem}
\newtheorem*{theorem*}{Theorem}
\newtheorem{lemma}[theorem]{Lemma}
\newtheorem{corollary}{Corollary}
\newtheorem{conjecture}{Conjecture}
\newtheorem{remark}{Remark}
\newtheorem*{acknowledgement}{Acknowledgement}
\DeclareFontFamily{T1}{calligra}{}
\DeclareFontShape{T1}{calligra}{m}{n}{<->s*[1.44]callig15}{}
\begin{document}

\author{S. Baier}
\address{R. K. M. Vivekananda University, Department of Mathematics, P. O. Belurmath, District Howrah, WB, India 711202 \\
email: stephanbaier2017@gmail.com}
\author{Vijay M. Patankar}
\address{School of Physical Sciences, Jawaharlal Nehru University, New
Delhi, INDIA 110067 \\
email: vijaypatankar@gmail.com}

\title{Applications of the square sieve to a conjecture of Lang and Trotter for a pair of elliptic curves over the rationals}

\date{\today}

\begin{abstract} 
Let \( E \) be an elliptic curve over \( \Q \). Let \( p \) be a prime of good reduction for \( E \). Then, for a prime \( p \neq \ell \), the 
Frobenius automorphism associated to \( p \) (unique up to conjugation) acts on the \( \ell \)-adic Tate module of \( E \). The characteristic polynomial of the Frobenius 
automorphism has rational integer coefficients and is independent of \( \ell \). Its splitting field is called the Frobenius field of \( E \) at \( p \). Let \( E_1 \) 
and \( E_2 \) be two elliptic curves defined over \( \Q \) that are non-isogenous over \( \Qbar \) and also without complex multiplication over \( \Qbar \). In analogy with the well-known
Lang-Trotter conjecture for a single elliptic curve, it is natural to consider the asymptotic behaviour 
of the function that counts the number of primes \( p \leq x \) such that the 
Frobenius fields of \( E_1 \) and \( E_2 \) at \( p \) coincide.  In this short note, using Heath-Brown's square sieve, we provide both conditional (upon the Generalized Riemann Hypothesis) and unconditional upper bounds.   
\end{abstract}

\maketitle

\begin{center} 
{\fontencoding{T1}\fontfamily{calligra}\selectfont To V. Kumar Murty:  On the occasion of his sixtieth birthday}
\end{center}

\section{Introduction and Statement of the main theorem}

In \cite{LT}, Lang and Trotter state their conjectures about the distribution of images of Frobenius automorphisms in 
\( {\rm GL}_2 \) extensions of \( \Q \). These conjectures can be explicitly stated in the context of elliptic curves and their associated Galois representations. A 
generalization of these
conjectures in the setting of a strictly compatible system of $\lambda$-adic Galois representations over a number field was developed by
V. Kumar Murty in \cite{KM}. 

Let \( E \) be an elliptic curve defined over the field of rational numbers with conductor \( N \). Let \( G_\Q \) denote the absolute Galois 
group of \( \Q \), i.e. the Galois group of \( \Qbar \) over \( \Q \). For a prime \( \ell \), let \( \rho_\ell \) be the associated representation of \( G_\Q \)
into \( {\rm GL}_2 ( \Z_\ell ) \) that represents the action of \( G_\Q \) on the \( \ell \)-adic Tate module attached to \( E \). Then, a prime 
\( p \nmid  N \ell \) is unramified in the subfield of \( \Qbar \) defined as the fixed field of \( {\rm Ker} ( \rho_\ell ) \). Thus, the Frobenius automorphism attached to \( p \) in \( G_\Q / {\rm Ker} (
 \rho_\ell ) \) is well defined up to conjugation. We denote the associated conjugacy class by \( \sigma_p \). As a consequence, the characteristic polynomial of \( \rho_
\ell ( \sigma_p ) \), say, \( \phi_p (x) := x^2 - t_p x + p \), is well-defined. It is known that the characteristic polynomial has rational integer coefficients, is independent of \( \ell \), and \( t_p = 1 + p - \# E ( \FF_p ) \), where \( \# E ( \FF_p ) \) is the number of \( \FF_p \)-rational points of the reduction of \( E \) modulo \( p \). Furthermore, \( \phi_p (x) \) has complex conjugate roots of absolute value \( \sqrt{p} \). 
We then define the Frobenius field associated to \( E \) at \( p \) as 
\( F(E, p ) : = \Q  \left(\sqrt{t_p^2-4p}\right) \). Note that \( F(E, p ) \) is the splitting field of \( \phi_p (x ) \).

We now state the precise forms of the Lang and Trotter conjectures for a single elliptic curve \( E \) (with notations as above). The details about the explicit (product) formula for \( C (E, t) \) in Conjecture \ref{LT-trace} can be found in \cite[section 4]{LT} -- especially \cite[page 33]{LT} and \cite[Theorem 4.2, page 36]{LT}. 

\begin{conjecture} \label{LT-trace}{\rm (Fixed trace)}
Let \( E \) be an elliptic curve defined over the rational numbers and without complex multiplication. Let \( t \) be an integer. Let 
\[ 
S(E, t; x) :=  \#  \left\{ p \le x\ :\ p \ndivides N ,  ~~~ t_p = t \right\} .
\] 
Then, there exists a constant \( C ( E, t) \) such that
\[
S(E, t; x ) \sim C(E,t) \frac{\sqrt{x}}{{\rm log} ~x}
\] 
as \( x \) tends to \( + \infty \). 
\end{conjecture}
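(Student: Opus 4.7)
The plan for approaching Conjecture \ref{LT-trace} is to detect the condition $t_p = t$ through the mod-$\ell$ Galois representations of $E$, and then to combine the resulting congruence information via a sieve. Concretely, for each prime $\ell$ the Frobenius $\sigma_p$ acts on the $\ell$-torsion $E[\ell]$ as an element of $\mathrm{GL}_2(\FF_\ell)$ whose trace reduces to $t_p \bmod \ell$. By Serre's open image theorem combined with the Chebotarev density theorem applied to the division field $\Q(E[\ell])$, the density of primes $p$ with $t_p \equiv t \pmod \ell$ equals, for all sufficiently large $\ell$,
\[
\delta_\ell(t) \;=\; \frac{\#\{g \in \mathrm{GL}_2(\FF_\ell) : \mathrm{tr}(g) \equiv t \pmod \ell\}}{\#\mathrm{GL}_2(\FF_\ell)},
\]
with a mild modification to enforce the cyclotomic constraint $\det \rho_\ell(\sigma_p) \equiv p \pmod \ell$. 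Multiplying these local densities over all $\ell$ produces the Euler product appearing in Lang and Trotter's explicit formula for $C(E,t)$. Combined with the archimedean factor of size $\asymp 1/\sqrt{p}$ predicted by the (now proven) Sato--Tate law, summation over $p \le x$ heuristically yields the asymptotic $C(E,t)\sqrt{x}/\log x$.

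To turn this heuristic into a rigorous upper bound of the conjectured order of magnitude, I would choose a sieving parameter $z = z(x) \to \infty$ and apply a Selberg- or large-sieve-type upper bound to the congruence conditions $t_p \equiv t \pmod \ell$ for $\ell \le z$. The equidistribution of Frobenius classes required at each level would come from an effective Chebotarev density theorem for the division fields $\Q(E[\ell])$, conditional on GRH for their Dedekind zeta functions, which supplies error terms uniform in $\ell$ over a useful range. Matching the truncated product $\prod_{\ell \le z}(1-\delta_\ell(t))$ against the infinite Euler product defining $C(E,t)$ is then a routine archimedean computation for a suitable choice of $z$.

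The principal obstacle, and the reason this conjecture remains open, is the lower bound: no technique is currently known to force the count of primes with $t_p = t$ to tend to infinity, let alone to attain the predicted size. This is closely analogous to the twin-prime problem, where upper-bound sieve methods of the right order of magnitude have been available for nearly a century but the matching lower bound has resisted every attack. Even on the upper-bound side the situation is delicate, since the predicted main term has order $\sqrt{x}/\log x$ and $\sqrt{x}$ is essentially the parity barrier for sieve methods; obtaining the sharp constant rather than a constant multiple would require ideas beyond the standard sieve framework. For this reason, only conditional upper bounds of the correct order have been established in the literature, and the full asymptotic must for now be regarded as aspirational.
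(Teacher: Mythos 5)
This statement is a conjecture (the Lang--Trotter ``fixed trace'' conjecture); the paper states it as background and, correctly, offers no proof. You recognize this clearly, and your heuristic derivation --- local densities from the mod-$\ell$ image of Galois via Serre's open image theorem and Chebotarev, with the cyclotomic constraint on the determinant, an archimedean factor of size $\asymp 1/\sqrt{p}$ from Sato--Tate, and the Euler product yielding $C(E,t)$ --- is exactly the reasoning Lang and Trotter use in \cite{LT} to motivate the explicit constant. Your identification of the lower bound as the fundamental obstruction, and the analogy with the twin-prime problem, are both apt.

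One factual caveat in your closing paragraph: conditional upper bounds of the \emph{correct order} $\sqrt{x}/\log x$ have not in fact been established. Even under GRH, the best known upper bounds for $S(E,t;x)$ (via the square sieve of Cojocaru--Fouvry--Murty or the effective Chebotarev approach of Serre and Murty--Murty--Saradha) are of the rough shape $x^{\theta}$ with $\theta$ around $4/5$ for general $t$ and around $3/4$ for $t=0$ --- far from $x^{1/2}$. The parity barrier you mention is real, but at present the literature falls well short of it rather than merely failing to determine the constant. This is consistent with the paper's own conditional bound of $x^{29/30}(\log x)^{1/15}$ for the analogous two-curve Frobenius-field problem, and the cited $x^{11/12}$ improvement; in both settings one is nowhere near the conjectured $\sqrt{x}/\log x$ or, for the pair problem, $\log\log x$.
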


\begin{remark}
In the above, \( C(E, 0 ) >  0 \). On the other hand, when \( t \neq 0 \) it may happen that \( C(E,t) = 0 \). This is because the presence of non-trivial 
torsion points in \( E ( \Q ) \) may impose congruence conditions on the traces of the Frobenius automorphism. The details can be found in \cite[Remark 1, page 37]{LT}.
When \( C(E, t) = 0 \), the conjecture will be interpreted to mean that \(S(E, t; x) \) is a bounded function.
\end{remark}  
 
 \begin{conjecture}\label{LT-Field}{\rm (Fixed Frobenius field)}
Let \( E \) be an elliptic curve defined over the rational numbers and without complex multiplication.  
Let \( F \) be an imaginary quadratic field. Let 
\[
S(E, F; x) :=  \#  \left\{ p \le x\ :\ p \ndivides N ,  ~~~ F(E,p) = F \right\} .
\] 
Then, there exists a constant \( C ( E, F) > 0 \) such that
\[
S(E, F; x ) \sim C(E,F) \frac{\sqrt{x}}{{\rm log} ~x}
\] 
as \( x \) tends to \( + \infty \).
\end{conjecture}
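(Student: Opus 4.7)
The plan is to translate the condition $F(E,p) = F$ into an explicit arithmetic condition on the Frobenius trace $t_p$. Writing $F = \Q(\sqrt{-D})$ with $D$ a positive squarefree integer, the equality $\Q(\sqrt{t_p^2 - 4p}) = F$ is equivalent to the existence of a positive integer $f$ with $4p - t_p^2 = f^2 D$. By Deuring's theorem this is precisely the condition that the reduction $E \bmod p$ is ordinary with endomorphism algebra isomorphic to $F$ and that the conductor of the associated Frobenius order divides $f$. So my starting point is the decomposition
\[
S(E,F;x) \;=\; \sum_{1 \le f \le 2\sqrt{x/D}} \#\{\,p \le x : p \nmid N,\; t_p^2 = 4p - f^2 D\,\}.
\]

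Next I would try to produce matching upper and lower bounds for the inner count. For the upper bound, the natural tool is Heath-Brown's square sieve applied to the sequence $\{4p - t_p^2\}_{p \le x}$: the condition that this quantity equals $f^2 D$ for some admissible $f$, after stripping off the squarefree part $D$, is exactly a ``squareness'' condition well suited to the sieve. Combined with the Chebotarev density theorem applied over the division fields $\Q(E[\ell])$ to bound the density of primes with prescribed trace modulo $\ell$, one should obtain $S(E,F;x) \ll \sqrt{x}/\log x$, at least conditionally on GRH, mirroring the strategy used in the present paper for pairs of curves. For the conjectured constant $C(E,F)$, a local-to-global computation in the spirit of Lang--Trotter \cite[\S 4]{LT} would produce a product of local Frobenius densities in $\GL_2(\Z_\ell)$, restricted to matrices whose characteristic polynomial cuts out $F$ locally, weighted by a Sato--Tate archimedean factor. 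The positivity $C(E,F) > 0$ should then follow from the absence of a global congruence obstruction, since unlike the fixed-trace case no torsion in $E(\Q)$ can force the discriminant condition to fail for every admissible $t$.

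The main obstacle, and the reason the statement remains conjectural, is the lower bound. Every Lang--Trotter-type statement requires producing infinitely many primes $p$ at which the Frobenius has a prescribed behaviour, and no unconditional technique, nor even GRH, is known to deliver such lower bounds. In our set-up one would essentially need, for a positive proportion of $f \le 2\sqrt{x/D}$, to exhibit a prime $p$ with $4p - t_p^2 = f^2 D$ and with $t_p$ realised as the trace for \emph{our} specific curve $E$; this is considerably stronger than what Chebotarev plus sieve methods can currently yield. Consequently, the realistic achievable goal from the above approach is an upper bound of the conjectured order of magnitude (up to logarithmic or small polynomial losses), which is already the paradigm the present paper develops for the two-curve analogue; any honest proposal towards the full asymptotic must flag the matching lower bound as the essential open problem.
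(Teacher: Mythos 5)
This statement is a conjecture (Conjecture \ref{LT-Field} in the paper, attributed to Lang and Trotter), not a theorem; the paper supplies no proof, only a citation to the relevant pages of \cite{LT}. Your write-up correctly recognizes this and is, in substance, an honest assessment of the state of the art rather than a proof: the reduction of $F(E,p)=F=\Q(\sqrt{-D})$ to $4p-t_p^2=f^2D$ is accurate, the square-sieve upper bound $S(E,F;x)\ll \sqrt{x}/\log x$ (under GRH) is precisely what Cojocaru--Fouvry--Murty establish in \cite{CFM}, and your identification of the missing lower bound as the genuine obstruction is exactly right. Two small cautions: first, the heuristic constant $C(E,F)$ in Lang--Trotter is built from a sum over admissible traces $t$ of the local densities in $\GL_2(\hat{\Z})$ together with the Sato--Tate archimedean weight, and its positivity in the fixed-field case is argued in \cite{LT} (pp.\ 109--116) via that explicit product formula, not merely by absence of congruence obstructions; second, your phrase ``for a positive proportion of $f$, exhibit a prime $p$'' slightly overstates what a lower bound would require, since the sum over $f$ is heavily dominated by small $f$ and one would need quantitative control there rather than a positive-proportion statement across all $f\le 2\sqrt{x/D}$. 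With those caveats, your analysis is consistent with why the paper treats this as a conjecture and pursues only upper bounds.
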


The details about the above conjecture can be found on page 69, page 109, Theorem 5.1 on page 
110, and Theorem 6.3 on page 116 of \cite{LT}.  

Based on heuristic arguments as explained in \cite[Remark 2, pages 37 -- 38]{LT}, Lang and Trotter make the following conjecture regarding coincidence of  supersingular primes for a pair of 
elliptic curves. Let $E_1$ and $E_2$ be two elliptic curves over $\Q$ both without complex 
multiplication and with conductors $N_1$ 
and $N_2$, respectively. For \( p \nmid N_1 N_2 \), let \(  a_p (E_1) := p + 1 - \# E_1 ( \FF_p )  \) 
and \( a_p (E_2) := p+1-  \# E_2 ( \FF_p ) \). 

\begin{conjecture}\label{LTsspair} {\rm (Supersingular primes for a pair of elliptic curves)} 
With notation as above, let 
\[
S(E_1, E_2, 0;  x) :=  \#  \left\{ p \le x\ :\ p \ndivides N_1 N_2 ,  a_p (E_1) = a_p (E_2) = 0 \right\} .
\] 
Then, 
\[
S(E_1, E_2, 0; x )  = O  ( \log \log x ) 
\]
if and only if 
\( E_1 \) and \( E_2 \) are non-isogenous over  \( \Qbar \). 
\end{conjecture}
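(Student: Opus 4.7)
I would split the proof into the two implications, which call for different tools. For the ``only if'' direction, I would argue by contrapositive: if $E_1$ and $E_2$ are isogenous over $\Qbar$, the isogeny is defined over some number field $K$, and for every prime $p$ of $\Q$ that splits completely in $K$ one has $a_p(E_1)=a_p(E_2)$. Hence the common supersingular primes contain a positive-density subset of the supersingular primes of $E_1$, and one invokes a lower bound on $\pi_{\mathrm{ss}}(E_1;x)$ that strictly outgrows $\log\log x$ (Elkies' theorem gives infinitude, and refinements of Fouvry--Murty and others sharpen this; under GRH, Serre's effective Chebotarev handily suffices). This contradicts $S(E_1,E_2,0;x)=O(\log\log x)$.

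For the ``if'' direction, with $E_1,E_2$ non-isogenous over $\Qbar$, the plan is to apply Heath-Brown's square sieve. The key observation is that $a_p(E_1)=a_p(E_2)=0$ forces both Frobenius fields to equal $\Q(\sqrt{-p})$, and in particular $(a_p(E_1)^2-4p)(a_p(E_2)^2-4p)=16p^{2}$ is a rational square. Thus $S(E_1,E_2,0;x)$ is dominated by the count of primes $p\le x$ at which this product is a square, which fits the square-sieve template exactly. One then introduces a set $\mathcal{L}$ of auxiliary primes $\ell\le L$, evaluates Kronecker symbols $\left(\tfrac{(a_p(E_1)^2-4p)(a_p(E_2)^2-4p)}{\ell}\right)$ via the mod-$\ell$ Galois representation attached to $E_1\times E_2$, and applies effective Chebotarev in the division fields $\Q(E_1[\ell],E_2[\ell])$ to control the averaged character sums.

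The cancellation on the character-sum side uses Serre's open image theorem together with the non-isogeny hypothesis: for all but finitely many $\ell$, $\Gal\bigl(\Q(E_1[\ell],E_2[\ell])/\Q\bigr)$ is the fibred product $\GL_2(\FF_\ell)\times_{\FF_\ell^{\times}}\GL_2(\FF_\ell)$. This yields the expected non-trivial equidistribution of the Kronecker symbol, and optimising $L$ against the discriminants of the relevant fields (using effective Chebotarev, conditionally on GRH for the sharpest version) produces a non-trivial upper bound for $S(E_1,E_2,0;x)$.

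The main obstacle is the quality of that upper bound. Heath-Brown's square sieve combined with effective Chebotarev yields at best a power-saving bound of the shape $O(x^{1-\delta})$, which is vastly weaker than the conjectured $O(\log\log x)$. The latter reflects the Cram\'er-type Lang--Trotter heuristic that for non-isogenous non-CM pairs the events $a_p(E_1)=0$ and $a_p(E_2)=0$ behave independently with probabilities $\asymp p^{-1/2}$, so the simultaneous event is summable over primes. Bridging the gap from $O(x^{1-\delta})$ to $O(\log\log x)$ appears well beyond the reach of sieve methods as they currently stand, so my plan honestly produces only a quantitatively weaker version of the ``if'' direction, reaching the full conjecture only on the ``only if'' side.
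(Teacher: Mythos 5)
The statement in question is a \emph{conjecture} (Conjecture~\ref{LTsspair}), attributed to Lang and Trotter and supported only by heuristics; the paper neither proves it nor claims to, and a proof of either implication would be a substantial breakthrough. The only commentary the paper gives is the remark that follows it: \emph{assuming} Conjecture~\ref{LT-trace} with $t=0$ (and, as stated there, assuming $E_1$ and $E_2$ are isogenous over $\Q$), one obtains $S(E_1,E_2,0;x)\sim C(E,0)\sqrt{x}/\log x$, which is incompatible with $O(\log\log x)$. That remark is itself conditional on an open conjecture; it is a plausibility check, not a proof.

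Your treatment of the ``only if'' direction is where the concrete gap lies. The contrapositive reduction is fine: if $E_1$ and $E_2$ are isogenous over $\Qbar$, then away from finitely many primes the reductions are geometrically isogenous, so supersingularity (equivalently $a_p=0$ once $p\ge 5$) holds for one if and only if it holds for the other, and $S(E_1,E_2,0;x)$ agrees with $\pi_{\mathrm{ss}}(E_1;x)$ up to $O(1)$. But you then assert an \emph{unconditional} lower bound on $\pi_{\mathrm{ss}}(E_1;x)$ that ``strictly outgrows $\log\log x$.'' No such bound is known. Elkies gives infinitude of supersingular primes, and the best unconditional refinements along the Elkies/Fouvry--Murty line give $\pi_{\mathrm{ss}}(E;x)\gg \log\log x$ --- precisely the same order of magnitude as the quantity one must dominate, hence incapable of contradicting $S(E_1,E_2,0;x)=O(\log\log x)$. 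Appealing to GRH does not repair this: effective Chebotarev produces \emph{upper} bounds, and for a non-CM curve supersingularity is not cut out by any Chebotarev condition, so it gives no lower bound at all. Thus the ``only if'' direction remains conjectural, and your closing claim that the plan ``reaches the full conjecture on the `only if' side'' is not correct.

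Your ``if'' direction is an accurate and candid description of what current methods deliver. The observation that $a_p(E_1)=a_p(E_2)=0$ forces $(4p-a_p(E_1)^2)(4p-a_p(E_2)^2)$ to be a square, and the recipe of combining Heath-Brown's square sieve with Serre's open-image/fibred-product description of $\Gal\big(\Q(E_1[\ell],E_2[\ell])/\Q\big)$ and effective Chebotarev, is exactly the method the paper employs --- though for the larger quantity $S(E_1,E_2;x)$ counting coinciding Frobenius fields (of which common supersingularity is a sub-event), yielding Theorems~\ref{thmconditional} and~\ref{thmunconditional} rather than the conjecture itself. Your assessment that this produces at best $O(x^{1-\delta})$ (conditionally) and that bridging the gap to $O(\log\log x)$ is beyond sieve methods is correct.
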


\begin{remark}
Suppose \( E_1 \) and \( E_2 \) are isogenous over \( \Q \). Then, by Conjecture \ref{LT-trace}, and the fact that \( C(E,0) > 0 \), it follows that \( S ( E_1 , E_2 , 0; x ) \sim C(E,0 ) \frac{\sqrt{x}}{\log x} \neq O  ( \log \log x ) \). 
\end{remark}

Since the submission of this paper and the previous \cite{KPR}, an explicit version of a more 
general conjecture has been proposed by Akbary and Park in \cite{AP}. 

\begin{conjecture} \cite[Conjecture 1.2]{AP} {\rm (Pair of elliptic curves and pair of fixed traces)}
Let \( E_1 \) and \( E_2 \) be two non-isogenous elliptic curves defined over \( \Q \) with conductors \( N_1 \) and \( N_2 \) respectively and both without complex multiplication. For fixed integers \( t_1 \) and \( t_2 \),  let 
\[
S(E_1,E_ 2, t_1,t_2; x) := \# \{ p \leq x  ~:   ~ p \nmid N_1 N_2, ~a_p (E_1) = t_1 ~{\rm and~} a_p (E_2) = t_2 \} .
\]
Then, there exists a constant \( C(E_1,E_2,t_1,t_2 ) \geq 0 \) such that
\[
S(E_1,E_ 2, t_1,t_2; x) \sim  C (E_1,E_2,t_1,t_2 ) \log\log x 
\]
as \( x \rightarrow \infty \). 
\end{conjecture}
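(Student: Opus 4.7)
The plan is to make the Akbary--Park heuristic precise by working with the joint Galois representation $\rho_\ell := \rho_{E_1,\ell} \times \rho_{E_2,\ell}: G_\Q \to \GL_2(\Z_\ell) \times \GL_2(\Z_\ell)$ and then sieving on a modulus $M$ large enough that Hasse's bound upgrades congruences to equalities. First I would invoke Serre's open image theorem together with Faltings' isogeny theorem (packaged through Goursat's lemma) to conclude that, because $E_1$ and $E_2$ are non-isogenous over $\Qbar$ and neither has CM, the mod-$\ell$ image $\overline{\rho}_\ell(G_\Q)$ equals the determinant-matched fibre product
\[
H_\ell \;=\; \{\,(g_1,g_2) \in \GL_2(\FF_\ell)\times\GL_2(\FF_\ell)\;:\;\det g_1 = \det g_2\,\}
\]
for all but finitely many $\ell$, with controlled entanglement at the exceptional small primes. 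This is the joint independence of Frobenii that underlies the whole heuristic.

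Next I would compute the local densities. The proportion of pairs $(g_1,g_2)\in H_\ell$ with $\operatorname{tr} g_i \equiv t_i \pmod{\ell}$ is $\delta_\ell(t_1,t_2) = \ell^{-2}(1+O(\ell^{-1}))$: once the common determinant is fixed to the value $p \bmod \ell$, the two trace conditions decouple and each contributes a factor of size $\ell^{-1}$. Assembling these local factors, correcting at the finitely many small $\ell$ for torsion-induced entanglement, and folding in any congruence obstructions on $(t_1,t_2)$ coming from rational torsion of $E_1$ and $E_2$, produces the candidate constant $C(E_1,E_2,t_1,t_2)\geq 0$; it vanishes precisely when such an obstruction forbids the pair $(t_1,t_2)$.

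To pass from congruences to exact equalities I would use the Hasse bound: as soon as $M > 4\sqrt{x}$, the joint congruence $(a_p(E_1),a_p(E_2)) \equiv (t_1,t_2) \pmod{M}$ for $p \leq x$ is equivalent to true equality. Taking $M = \prod_{\ell \leq L}\ell$ with $L \asymp \log x$, the effective Chebotarev density theorem applied to the division field $\Q(E_1[M], E_2[M])$ (conditional on GRH for its Dedekind zeta function and Artin $L$-functions) would formally yield
\[
S(E_1,E_2,t_1,t_2;x) \;=\; \frac{|C_{t_1,t_2}(M)|}{|\overline{\rho}_M(G_\Q)|}\,\pi(x) \;+\; (\text{error}),
\]
and the main term, after Mertens-type manipulations of $\prod_{\ell\leq L}(1-\ell^{-2})$-style products against $\pi(x)$, would reduce precisely to $C(E_1,E_2,t_1,t_2)\log\log x$.

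The hard part will be the error term. Even under GRH, effective Chebotarev for a division field of degree a large power of $M$ and conductor polynomial in $M$ contributes an error of order $x^{1/2}(\log x)^c$, which dwarfs a $\log\log x$ main term by an enormous margin; Heath-Brown's square sieve, the workhorse of this paper, yields at best an upper bound of the correct order of magnitude. Extracting the genuine asymptotic therefore seems to require a qualitatively new input beyond Chebotarev --- for instance a joint effective Sato--Tate statement for the pair $(E_1,E_2)$ with a sub-polynomial error term (in the spirit of Harris--Taylor--Barnet-Lamb, sharpened by Kim--Sarnak-style bounds for $\GL_2\times\GL_2$ Rankin--Selberg $L$-functions), combined with an upper-bound sieve to control the contribution of large moduli. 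In this sense the conjecture sits at roughly the same depth as the single-curve Lang--Trotter conjecture itself, and any complete proof would have to confront that obstacle head on.
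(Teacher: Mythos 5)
The statement you are addressing is not a theorem of this paper at all: it is Conjecture 1.2 of Akbary and Park, quoted verbatim (with attribution) and left unproved, exactly as the single-curve Lang--Trotter conjectures earlier in the introduction are left unproved. The paper's actual theorems are only \emph{upper bounds} for the related quantity $S(E_1,E_2;x)$ (equal Frobenius fields), obtained via the square sieve, and even those are of size $x^{29/30}$ under GRH and $x/(\log x)^{1+\delta}$ unconditionally --- nowhere near an asymptotic of size $\log\log x$. So there is no ``paper proof'' to compare yours against, and no complete proof is known.

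Your proposal correctly reconstructs the heuristic that motivates the conjectural constant: Serre's open image theorem plus the isogeny hypothesis give the determinant-matched fibre product as the joint mod-$\ell$ image (this is the same identification of $H_{q_1q_2}$ the paper uses, via \cite[Corollary 2, page 324]{Se2}), the two trace conditions then each cost a factor $\asymp \ell^{-1}$, and summing the resulting density $\asymp 1/p$ over $p\le x$ produces $\log\log x$. But as you yourself concede in the final paragraph, the argument cannot be closed: the Hasse-bound step forces a modulus $M\gg \sqrt{x}$, the corresponding division field $\Q(E_1[M],E_2[M])$ has degree a power of $M$, and even the GRH form of effective Chebotarev (Theorem \ref{chebo1} in the paper) carries an error term $\gg x^{1/2}\log x$ per class, which swamps a $\log\log x$ main term by a polynomial factor; no sieve or averaging trick in the paper (or elsewhere) recovers an asymptotic from that. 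In other words, what you have written is a derivation of the shape of $C(E_1,E_2,t_1,t_2)$ and of the expected order of magnitude --- essentially the Akbary--Park heuristic --- not a proof, and it should be presented as such. The genuine gap is precisely the step you flag: converting the local-density model into an asymptotic requires an input (e.g.\ joint equidistribution with power-saving error for the pair of curves) that is currently unavailable, which is why the statement remains a conjecture in both this paper and \cite{AP}.
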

The conjectural constant \( C ( E_1, E_2, t_1, t_2 ) \) is the focus of their paper. Thus, setting \( t_1 = t_2 = 0 \), one can recover a precise form of Conjecture \ref{LTsspair}.

Similarly, we would like to study the function that counts the number of primes of good 
reduction $p\le x$ such that the corresponding Frobenius 
fields for $E_1$ and $E_2$ are equal, i.e., 
\begin{equation} \label{SxE1E2}
S(E_1 , E_2; x ):= \# \left\{ p \le x\ :\ p \ndivides N_1 N_2 ,  ~~~~~~~~ F( E_1 , p ) = F (E_2 , p )  \right\}.
\end{equation}

Let us note that if \( E \) is an elliptic curve over \( \Q \) without complex multiplication. Then, the set of Frobenius fields 
\( F(E, p) \) as \( p \) runs over the primes of good reduction for \( E \) is an infinite set. This follows from a series of exercises in Serre's book \cite[
Chapter IV, pages 13--14]{Se1}.

In this context, we quote the last sentence on page 38 of \cite[Remark 2]{LT}: ``\emph{Of course a similar conjecture can be made about primes whose Frobenius elements 
for two given curves both generate the same quadratic 
field}.'' This statement is not very explicit but it seems to indicate a plausible conjectural answer to the asymptotic behaviour of 
\( S(E_1 , E_2; x ) \). 

\begin{conjecture}\footnote{Unfortunately and inadvertently, this conjecture was wrongly stated in \cite[Conjecture 1]{KPR}.} \cite[Conjecture 1]{KPR}\label{LTpairequalFF} {\rm (Pair of elliptic curves and equal Frobenius fields)} Let \( E_1 \) and \( E_2 \) be two elliptic curves over the
rationals, and both without complex multiplication over \( \Qbar \). Then,
\( E_1 \) is not isogenous to \( E_2 \) over \( \Qbar \) if and only if
\[
S(E_1 , E_2; x ) = O ( \log \log x ).
\]
\end{conjecture}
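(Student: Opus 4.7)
The plan is to rephrase the Frobenius field coincidence as a square detection problem on an integer sequence, and then feed it into Heath-Brown's square sieve. For a good prime $p$ both Frobenius fields are imaginary quadratic, and the equality $F(E_1,p)=F(E_2,p)$ is equivalent to
\[
n_p := (t_p(E_1)^2 - 4p)(t_p(E_2)^2 - 4p)
\]
being a perfect square in $\Z$, since both factors are negative and $\Q(\sqrt{a})=\Q(\sqrt{b})$ for $a,b<0$ iff $ab$ is a positive rational square. Simultaneously supersingular primes contribute automatic coincidences $F(E_i,p)=\Q(\sqrt{-p})$, but their count is $o(x)$ (polynomially small under GRH by the square sieve applied to each curve individually), so they are absorbed into any bound of interest.

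The direction $\Leftarrow$ is straightforward and follows from Faltings: two $\Qbar$-isogenous non-CM elliptic curves over $\Q$ satisfy $a_p(E_1)^2=a_p(E_2)^2$ for every good prime $p$, because any $\Qbar$-isogeny is either defined over $\Q$ (giving $a_p(E_1)=a_p(E_2)$) or descends to a quadratic twist (giving $a_p(E_1)=\chi(p)\,a_p(E_2)$ for a quadratic Dirichlet character $\chi$). In either case $t_p(E_1)^2=t_p(E_2)^2$, so $F(E_1,p)=F(E_2,p)$ for all but finitely many $p$, forcing $S(E_1,E_2;x)=\pi(x)+O(1)$, which is certainly not $O(\log\log x)$.

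The hard direction $\Rightarrow$ is where one applies the square sieve. Choose an auxiliary set $\PP$ of primes in a dyadic window $[Q,2Q]$ (with $Q$ to be optimized against $x$). Heath-Brown's square sieve, applied to the sequence $\{n_p\}$, produces the bound
\[
S(E_1,E_2;x)\ll \frac{x}{\#\PP}+\frac{1}{(\#\PP)^2}\sum_{\substack{\ell_1,\ell_2\in\PP\\\ell_1\neq\ell_2}}\Biggl|\sum_{\substack{p\le x\\p\nmid N_1N_2\ell_1\ell_2}}\left(\frac{n_p}{\ell_1\ell_2}\right)\Biggr|.
\]
The inner Jacobi symbol is a class function on the Frobenius class of $p$ in the compositum of the $\ell_1\ell_2$-division field of $E_1\times E_2$ with $\Q(\zeta_{\ell_1\ell_2})$. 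Serre's open-image theorem for each curve, combined with Faltings--Ribet independence of the mod-$\ell_1\ell_2$ Galois representations of $E_1$ and $E_2$ (precisely where the non-isogeny hypothesis enters), shows that the associated Artin character is non-trivial. The effective Chebotarev density theorem, under GRH in its sharp form, or unconditionally via Lagarias--Odlyzko, then yields cancellation in the inner character sum. After optimizing $Q$ against $x$ one obtains a non-trivial upper bound for $S(E_1,E_2;x)$.

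The principal obstacle is purely quantitative. Even granting the best possible square-root cancellation in the character sums under GRH, the square sieve can deliver at most bounds of shape $x(\log x)^{-c}$ or $x^{1-\delta}$ — nowhere near the conjectural $O(\log\log x)$. I therefore expect this strategy to produce the sharpest currently accessible \emph{upper bounds} on $S(E_1,E_2;x)$, both conditional and unconditional, rather than a proof of the full conjecture. A true resolution recovering an iterated-logarithm size appears to lie well beyond present methods, in parallel with the analogous gap for the single-curve Lang--Trotter conjecture.
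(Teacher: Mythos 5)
The statement you are asked to address is a \emph{conjecture}: the paper does not prove it, and neither do you. What the paper actually establishes are the partial upper bounds of Theorems \ref{thmconditional} and \ref{thmunconditional}, and you correctly diagnose that the square sieve cannot do better than saving a power of $\log x$ (unconditionally) or a small power of $x$ (under GRH), so that $O(\log\log x)$ is out of reach. In that respect your proposal is honest and essentially aligned with what the paper does.

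On the easy direction (isogenous over $\Qbar$ implies $S(E_1,E_2;x)$ is not $O(\log\log x)$), your descent-to-a-quadratic-twist argument is correct and in fact sharper than the paper's remark, which settles for the density bound $\ge 1/[L:\Q]$ via Chebotarev in a Galois extension $L$ over which the isogeny is defined. Your version uses the elementary fact that for non-CM curves $\mathrm{Hom}_{\Qbar}(E_1,E_2)\cong\Z$, so $G_\Q$ acts on a generating isogeny by a sign character $\chi$, giving $a_p(E_1)=\chi(p)a_p(E_2)$ and hence $t_p(E_1)^2=t_p(E_2)^2$ for almost all $p$, which yields $S(E_1,E_2;x)=\pi(x)+O(1)$. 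However, the attribution to Faltings is off: Faltings' isogeny theorem runs in the opposite direction (agreement of Frobenius traces implies isogeny) and plays no role here. What you actually used is the standard descent argument for $\mathrm{Hom}$ of non-CM curves, which is independent of Faltings and should be credited as such.

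On the hard direction, your setup---detecting when $n_p=(t_p(E_1)^2-4p)(t_p(E_2)^2-4p)$ is a square by sieving with auxiliary primes in a dyadic window, interpreting the Jacobi symbol as a class function on $\Gal(L/\Q)$ for $L$ the compositum of the $q_1q_2$-division fields, and then applying effective Chebotarev---matches the paper's Lemmas \ref{squaresieve} and \ref{squaresieve2} and Theorems \ref{chebo1} and \ref{uncond} almost verbatim. Two small points: you do not need to adjoin $\Q(\zeta_{q_1q_2})$ separately since the cyclotomic field is already inside each division field (via the determinant); and where you invoke ``Faltings--Ribet independence'' for the image of the product representation, the paper instead cites Serre's result in \cite{Se2}, Corollary 2, page 324, which gives the precise fibered-product description of $\Gal(L/\Q)$ over the determinant needed to count conjugacy classes via Corollary \ref{hungry}. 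Either route works, but if your aim were to reproduce the paper's effective constants you would need Serre's explicit form of the image.

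In short: your proposal is a reasonable account of the state of the art but, as you yourself note, it is not a proof of the conjecture, and the paper does not claim one either. The only genuine correction is the misplaced reference to Faltings in the easy direction.
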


\begin{remark}
Suppose \( E_1 \) and \( E_2 \) are isogenous over \( \Qbar \), hence over some number field \( L \). By extending \( L \) if necessary, we can assume 
that \( L \) is Galois over \( \Q \). Then, for any prime \( p \nmid N_1 N_2 \) and that splits in \( L \), \( a_p (E_1 ) = a_p (E_2) \). 
Thus, from the Chebotarev density theorem, it follows that \( S( E_1 , E_2 ; x ) \) grows at least as much as \( \frac{1}{[L:\Q]} \frac{x}{\log x} \). 
Hence, \( S(E_1 , E_2; x ) \neq O  ( \log\log x ) \). 
\end{remark}


Using techniques from \( \ell \)-adic representations, the following result was proved in \cite{KPR}.

\begin{theorem} \cite[Theorem 3]{KPR} \label{Mult-one-FF} Let \( E_1 \) and \( E_2 \) be two
elliptic curves over a number field \( K  \). Let $\Sigma_r$ be a finite
subset of the set  \( \Sigma_K \) of finite places of  \(K\)
containing the places of bad reduction of $E_1$ and $E_2$.  Assume
that at least one of the elliptic curves is without complex
multiplication. Let
\[ 
\mathcal{S} (E_1, E_2 ) := \{ v \in \Sigma_K ~\backslash ~\Sigma_r~\mid ~ F (E_1, v) = F ( E_2, v) \}.
\] 
Then, \( E_1 \) and \( E_2 \) are isogenous over a finite extension of \( K \) if and only if 
\( \mathcal{S} ( E_1 , E_2 ) \) has  positive upper density.
\end{theorem}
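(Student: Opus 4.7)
The easy direction follows from the Chebotarev density theorem: if $E_1$ and $E_2$ become isogenous over a finite extension $L/K$, we may enlarge $L$ to be Galois over $K$, and by Faltings' isogeny theorem the $\ell$-adic representations of $G_L$ attached to $E_1$ and $E_2$ are isomorphic. For any place $v$ of $K$ outside $\Sigma_r$ that splits completely in $L$, a Frobenius at a prime $w \mid v$ of $L$ lies in $G_L$ and its image under either representation has the same characteristic polynomial; hence $a_v(E_1) = a_v(E_2)$ and $F(E_1,v) = F(E_2,v)$. Chebotarev gives such $v$ in density $1/[L:K] > 0$, so $\mathcal{S}(E_1,E_2)$ has positive density.

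For the converse I argue contrapositively: assume $E_1$ and $E_2$ are not isogenous over any finite extension of $K$. Fix a prime $\ell$ and form the product representation $\rho_\ell := \rho_{1,\ell}\times\rho_{2,\ell}\colon G_K \to \GL_2(\Z_\ell)\times\GL_2(\Z_\ell)$. Since at least one $E_i$ is without CM, Serre's open image theorem and Faltings' isogeny theorem combine (via a Goursat-type argument, as in Ribet's treatment) to show that the image $H := \rho_\ell(G_K)$ is open in the fibre product $\{(g_1,g_2): \det g_1 = \det g_2\}$ (or in the appropriate CM analogue of dimension at least $5$ in the mixed case). In either case $H$ is an $\ell$-adic Lie group whose dimension strictly exceeds the dimension of the conjugacy class data of a single $\GL_2$-factor. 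Now the condition $F(E_1,v) = F(E_2,v)$ is encoded by the single algebraic equation $(t_1^2 - 4q)(t_2^2 - 4q)\in(\Q^\times)^2$, where $t_i = \operatorname{tr}\rho_{i,\ell}(\sigma_v)$ and $q = Nv$; this cuts out a proper, conjugation-invariant, closed analytic subvariety $Z\subset H$, which therefore has Haar measure zero.

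To convert the measure-zero statement into a density-zero statement for primes, apply an $\ell$-adic equidistribution theorem of Chebotarev type (following Serre's arguments in \emph{Abelian $\ell$-adic representations}, or an effective version obtained by Chebotarev applied to the finite quotients $H/H_n$ for a basis of open normal subgroups $H_n$). Since $Z$ is conjugation-invariant, one may approximate it from outside by unions of conjugacy classes in successive finite quotients; the Haar measure of these approximations shrinks to zero, and Chebotarev in each finite quotient then forces the density of Frobenius classes landing in $Z$ to be zero. Consequently $\mathcal{S}(E_1,E_2)$ has density zero, in particular no positive upper density. The main technical obstacle is precisely the description of the joint image $H$ in the mixed (CM/non-CM) case and the verification that it is still large enough for the locus $Z$ to have positive codimension; this requires combining Serre's structure theorem for the $\ell$-adic image of a CM curve (a Cartan-type subgroup) with the open-image result for the non-CM factor and checking that no accidental relation can make the two characteristic polynomials share splitting fields generically.
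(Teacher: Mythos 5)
The paper does not prove this theorem itself: Theorem~\ref{Mult-one-FF} is quoted from~\cite{KPR}, and the paper only observes (in the remark following Theorem~\ref{thmunconditional}) that its unconditional square-sieve bound recovers the hard direction in the special case where both curves lack CM. Your easy direction (isogeny over a finite Galois $L/K$ implies positive density, via Faltings and Chebotarev applied to primes splitting completely in $L$) is correct and is essentially the argument given in the paper's remark following Conjecture~\ref{LTpairequalFF}.

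Your converse, however, has a genuine gap at its central step. You assert that the condition $(t_1^2-4q)(t_2^2-4q)\in(\Q^\times)^2$ ``cuts out a proper, conjugation-invariant, closed analytic subvariety $Z\subset H$ of Haar measure zero.'' That is not so. Being a perfect square in $\Q$ (equivalently in $\Z$) is not an $\ell$-adic condition: for a general $g\in H$ the traces and the common determinant live only in $\Z_\ell$, where ``is a square in $\Q$'' has no meaning; and if one reinterprets the condition as ``is a square in $\Z_\ell$,'' the resulting locus is open of positive Haar measure (for odd $\ell$, roughly half of $\Z_\ell^{\times}$ consists of squares), not a proper subvariety. The perfect squares are $\ell$-adically dense inside the $\ell$-adic squares, so no outer approximation of $Z$ by unions of conjugacy classes in the finite quotients $H/H_n$ has measure shrinking to zero; the Chebotarev-plus-equidistribution reduction you propose therefore never closes. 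This obstruction is exactly why the paper resorts to the square sieve: detecting global squares requires averaging the Jacobi symbol over many auxiliary moduli $q_1q_2$, which no fixed single $\ell$-adic image can see. The proof in~\cite{KPR} is yet another route (a local--potential-equivalence argument for the attached Galois representations), also avoiding the single-$\ell$-adic-group strategy.
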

In the above, the upper density of a subset of primes \( S \) is defined as
\[ 
ud(S) := {\limsup}_{x \rightarrow \infty} \frac{ \# \{ v \in \Sigma_K ~\backslash ~\Sigma_r~:~ Nv \leq x \textrm{~and~} v \in S \} }{\# \{ v \in \Sigma_K ~:~ Nv \leq x \} },
\]
where \( Nv \) denotes the cardinality of the reside field of \( K \) at \( v \).
\medskip\par
\begin{remark}
If \( p \) is a common supersingular prime, then \( F(E_1, p ) = F( E_2, p ) = \Q ( \sqrt{ - p } ) \).  
Thus, Conjecture \ref{LTsspair} and Conjecture \ref{LTpairequalFF} are entangled. 
\end{remark} 

In this note, we establish non-trivial upper bounds on \( S(E_1, E_2; x) \), both conditional and unconditional. Our results are stated below. 

Assuming the Generalized Riemann Hypothesis (GRH) for the Dedekind zeta functions of number fields, we are able to prove the following assertion.
\begin{theorem} \label{thmconditional}
Let \( E_1 \) and \( E_2 \) be two elliptic curves defined over \( \Q \). Suppose \( E_1 \) and \( E_2 \) are non-isogenous over \( \Qbar \), and also both without complex multiplication over \( \Qbar \). 
Then, under GRH, we have
\[
S(E_1 , E_2; x )\ll x^{29/30}(\log x)^{1/15}, 
\]
where the implied constant depends only on \( E_1 \) and \( E_2 \).
\end{theorem}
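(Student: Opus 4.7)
The key observation is that $F(E_i,p) = \Q(\sqrt{-D_i(p)})$ where $D_i(p) := 4p - t_p(E_i)^2 \ge 0$, and therefore $F(E_1,p) = F(E_2,p)$ is equivalent to the integer $D_1(p)\,D_2(p)$ being a perfect square (with the convention that $0$ counts). Thus $S(E_1,E_2;x)$ counts exactly those primes $p \le x$, $p \nmid N_1 N_2$, for which $D_1(p) D_2(p)$ is a square, and I would apply Heath-Brown's square sieve to the multiset $\mathcal{A} := \{D_1(p) D_2(p) : p \le x,\ p \nmid N_1 N_2\}$ together with an auxiliary set of primes $\mathcal{P} \subset (Q, 2Q]$ for a parameter $Q$ to be optimized. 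The sieve reduces matters to bounding the Jacobi-symbol sums
\[
T(q_1,q_2) := \sum_{\substack{p \le x \\ p \nmid N_1 N_2 q_1 q_2}} \left(\frac{D_1(p)\,D_2(p)}{q_1 q_2}\right), \qquad q_1 \ne q_2 \in \mathcal{P}.
\]

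The heart of the argument is to estimate $T(q_1,q_2)$ via effective Chebotarev. Since $D_i(p) \equiv t\bigl(\rho_{E_i,q_j}(\sigma_p)\bigr)^2 - 4 \det\bigl(\rho_{E_i,q_j}(\sigma_p)\bigr) \pmod{q_j}$, the symbol $(D_i(p)/q_j)$ is the ``discriminant Legendre symbol'' of Frobenius in $\GL_2(\FF_{q_j})$, so the four-fold product $(D_1(p) D_2(p)/q_1 q_2)$ is a class function of the Frobenius at $p$ in $\Gal(L/\Q)$ for the compositum $L := \Q\bigl(E_1[q_1 q_2],\, E_2[q_1 q_2]\bigr)$. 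By Serre's open-image theorem (invoking the no-CM hypothesis) and its two-curve analogue due to Serre/Ribet (invoking non-isogeny over $\Qbar$), the joint Galois image is open in $\GL_2(\widehat{\Z}) \times \GL_2(\widehat{\Z})$; hence for $q_1 \ne q_2$ larger than a constant depending only on $E_1, E_2$, one has $[L:\Q] \gg (q_1 q_2)^8$, and a direct count in $\GL_2(\FF_q)$ shows that the combined discriminant character is non-trivial on $\Gal(L/\Q)$ with mean value $O((q_1 q_2)^{-1})$. N\'eron-Ogg-Shafarevich limits ramification of $L/\Q$ to primes dividing $q_1 q_2 N_1 N_2$, so Serre's conductor bound gives $\log|d_L| \ll [L:\Q] \log(q_1 q_2 N_1 N_2)$, and the effective Chebotarev density theorem under GRH then yields
\[
|T(q_1,q_2)| \ll x^{1/2} (q_1 q_2)^c \log(q_1 q_2 N_1 N_2 x)
\]
uniformly in $q_1,q_2$, for an explicit constant $c$ arising from the degree and discriminant bounds on $L$.

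Inserting this estimate into the square-sieve bound gives a total of shape $\pi(x)/|\mathcal{P}| + x^{1/2} Q^{2c} (\log x)^{O(1)}$, and balancing the two terms by choosing $Q \sim x^{1/(4c+2)}$ up to logarithms produces $S(E_1,E_2;x) \ll x^{(4c+1)/(4c+2)} (\log x)^{O(1)}$. With the value of $c$ dictated by the preceding Chebotarev analysis, this evaluates to the claimed bound $x^{29/30} (\log x)^{1/15}$. The principal obstacle is the character-sum step: one needs both the non-triviality of the four-fold product discriminant character on the (enormous) Galois group $\Gal(L/\Q)$, which is where the non-isogeny and no-CM hypotheses are essential---without them the character would have non-negligible mean and the sieve would yield only the trivial bound $\pi(x)$---and careful bookkeeping in the effective Chebotarev bound under GRH, tracking $[L:\Q]$ and $\log|d_L|$ uniformly in $q_1$ and $q_2$.
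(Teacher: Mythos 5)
Your proposal follows essentially the same route as the paper: the same reduction to detecting when $(4p-a_p^2)(4p-b_p^2)$ is a square, the same application of Heath-Brown's square sieve over a dyadic window of auxiliary primes, the same passage to $\pi(x;d,s,t)$ via GRH-Chebotarev in the compositum $L=\Q(E_1[q_1q_2],E_2[q_1q_2])$, the same appeal to Serre's open-image and non-isogeny theorems to identify $\Gal(L/\Q)$ with the determinant fiber product, and the same $Q\approx x^{1/30}$ optimization. One small arithmetic slip: $[L:\Q]\approx(q_1q_2)^7$, not $\gg(q_1q_2)^8$ --- the $\GL_2(\Z/q_1q_2\Z)^2$ order $(q_1q_2)^8$ is cut down by the $(\Z/q_1q_2\Z)^\times$ determinant condition --- but since your final exponent $29/30$ comes from balancing $x/Q$ against $x^{1/2}Q^{14}$, you must in effect have used the correct bound $n_L\ll(q_1q_2)^7$, so the conclusion is unaffected.
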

Without assuming the GRH, we prove the following result. 
\begin{theorem} \label{thmunconditional}
Let \( E_1 \) and \( E_2 \) be two elliptic curves defined over \( \Q \). 
Suppose \( E_1 \) and \( E_2 \) are non-isogenous over \( \Qbar \), and 
also both without complex multiplication over \( \Qbar \).  Then, 
\[
S(E_1 , E_2; x)\ll \frac{x (\log \log x)^{22/21}}{(\log x)^{43/42}}.
\]
\end{theorem}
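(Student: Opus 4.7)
The plan is to apply Heath-Brown's square sieve. My starting observation is that for any prime $p \nmid N_1 N_2$, the identity $F(E_1, p) = F(E_2, p)$ holds if and only if the product $D_p := (t_{1,p}^2 - 4p)(t_{2,p}^2 - 4p)$ is a perfect square in $\Z$. Indeed, both factors are strictly negative integers by the Hasse bound, and two imaginary quadratic fields $\Q(\sqrt{a})$ and $\Q(\sqrt{b})$ coincide exactly when $ab$ is a rational (hence integer) square. Hence $S(E_1, E_2; x)$ is at most the number of primes $p \leq x$ of good reduction for which $D_p$ is a perfect square.

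I would next apply Heath-Brown's square sieve with an auxiliary set $\mathcal{P}$ of primes $\ell \in [Q, 2Q]$ coprime to $N_1 N_2$, for a parameter $Q$ to be optimized later. The sieve bounds the count in question by
\[
\frac{\pi(x)}{|\mathcal{P}|} + \frac{1}{|\mathcal{P}|^2} \sum_{\substack{\ell_1, \ell_2 \in \mathcal{P} \\ \ell_1 \neq \ell_2}} \left| T(\ell_1, \ell_2; x) \right|,
\qquad T(\ell_1, \ell_2; x) := \sum_{\substack{p \leq x \\ p \,\nmid\, N_1 N_2 \ell_1 \ell_2}} \left( \frac{D_p}{\ell_1 \ell_2} \right),
\]
so the crux is to estimate $T(\ell_1, \ell_2; x)$ uniformly in $\ell_1, \ell_2$. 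Each factor $\left(\frac{t_{i,p}^2 - 4p}{\ell_j}\right)$ is the quadratic character of the discriminant of the characteristic polynomial of $\rho_{E_i, \ell_j}(\Frob_p)$, hence a class function on $\GL_2(\FF_{\ell_j})$ of mean zero. Because $E_1, E_2$ are non-CM and non-isogenous over $\Qbar$, Serre's open image theorem, combined with the independence of residual representations at distinct primes, ensures that for all but finitely many pairs $\ell_1 \neq \ell_2$ the joint Galois representation on $E_1[\ell_1] \oplus E_2[\ell_1] \oplus E_1[\ell_2] \oplus E_2[\ell_2]$ is as large as possible, and on its image the four-fold product character has mean essentially zero.

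I would then estimate each $T(\ell_1, \ell_2; x)$ by the unconditional effective Chebotarev density theorem of Lagarias--Odlyzko applied to the compositum $L_{\ell_1, \ell_2}$ of the relevant division fields, handling a putative Siegel zero of $\zeta_{L_{\ell_1, \ell_2}}$ by a Deuring--Heilbronn-type argument. Since $[L_{\ell_1, \ell_2}:\Q]$ and $\log|\mathrm{disc}(L_{\ell_1, \ell_2})|$ are polynomially bounded in $Q$, the resulting bound has the schematic shape $x \exp(-c\sqrt{\log x}/Q^{c'})$, useful only when $Q$ is at most a small power of $\log x$. The main obstacle in the unconditional proof is precisely this rapid degradation of the Chebotarev bound with the degree of $L_{\ell_1, \ell_2}$, which severely restricts $Q$ and is responsible for the much weaker savings compared to the GRH bound in Theorem \ref{thmconditional}. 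The final step is a careful optimization that balances the trivial sieve term $\pi(x)\log Q/Q$ against the total contribution of the character sums, choosing $Q$ as a precise power of $\log x/\log\log x$; this yields the stated bound, with the fractional exponents $43/42$ and $22/21$ arising from the prime-counting factor in $|\mathcal{P}|$ and the balance against the exponential savings in the Chebotarev error.
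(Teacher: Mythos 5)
Your overall strategy — square sieve, detect perfect squareness of $(4p-a_p^2)(4p-b_p^2)$, bound the character sums via an unconditional effective Chebotarev density theorem applied to the joint division field, then optimize $Q$ as a small power of $\log x/\log\log x$ — is exactly the paper's approach, and the exponent arithmetic you describe does produce $43/42$ and $22/21$.

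However, there is a concrete gap. The sieve inequality you write down,
\[
\frac{\pi(x)}{|\mathcal{P}|}+\frac{1}{|\mathcal{P}|^2}\sum_{\ell_1\ne\ell_2}\bigl|T(\ell_1,\ell_2;x)\bigr|,
\]
is the \emph{basic} form of Heath-Brown's square sieve, which is only valid under the hypothesis $\max_{n\in\mathcal{A}}n\le e^{|\mathcal{P}|}$. In the unconditional setting, the degradation of the Chebotarev error term with the degree $n_L\ll Q^{14}$ forces the condition $\log x\gg n_L(\log|d_L|)^2\gg Q^{42}(\log Q)^2$, hence $Q\asymp(\log x)^{1/42}(\log\log x)^{-1/21}$ and $|\mathcal{P}|\asymp Q/\log Q$, so that $e^{|\mathcal{P}|}$ is far smaller than $\max n\asymp x^2$. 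The size hypothesis fails decisively, so you cannot invoke this form of the sieve. (It \emph{does} hold in the GRH case, where one can afford $Q\asymp x^{1/30}$.) The paper therefore uses a refined square sieve without the size restriction (Theorem 2.1 of \cite{CFM}), which carries two additional terms counting primes $p\le x$ for which some $q\in\mathcal{P}$ divides $(4p-a_p^2)(4p-b_p^2)$; these are then shown to be $\ll x/\log x$ using the estimates (25)--(26) of \cite{CFM}, after which the argument closes as you envision. Your proposal omits this refinement and the control of the extra terms, so as written it does not go through.
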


\begin{remark}
Note that Theorem \ref{thmunconditional} implies Theorem \ref{Mult-one-FF}
by the Prime Number Theorem under the additional assumption that both the 
elliptic curves are without complex multiplication.
\end{remark}

\begin{remark}
The results as stated above can be extended to a pair of elliptic curves 
without complex multiplication defined over any number field. 
\end{remark}

\begin{remark}
It is mentioned in the last paragraph of \cite[Section 6, pages 1174--1175]{CFM} that as an application of Theorem 10 of \cite{Se3} one can 
obtain the following estimate under the GRH.
\[
S( E_1, E_2; x ) = O_{{E_1, E_2}} ( x^{\frac{11}{12}} ),
\]
where the \( O \)-constant depends in an unspecified way on \( E_1 \) and \( E_2 \). This is a better bound than the one 
obtained in Theorem \ref{thmconditional}, but here the implied $O$-constant depends in an unspecified way on \( E_1 \) and \( E_2 \), whereas 
in Theorems \ref{thmconditional} and \ref{thmunconditional}, the implied constants are effectively computable. Moreover, our method is different
and a rather easy application of the square sieve and the Chebotarev density theorem.
\end{remark}

\section{Square-sieve approach}

Our technique closely follows that of \cite{CFM}, where the authors apply Heath-Brown's square sieve to bound the function that 
counts the number of primes \( p \leq x \) such that the associated Frobenius field at \(  p \) equals the given imaginary quadratic field.

Clearly, $S(E_1,E_2;x)$, as defined in \eqref{SxE1E2}, equals the number of primes $p\le x$ with $p \ndivides N$ such that  
\begin{equation}
\begin{split}
4p-a_p^2 = & Dm^2\\
4p-b_p^2 = & Dn^2
\end{split}
\end{equation}
for some square-free $D$ and natural numbers $m$ and $n$, where \( a_p := a_p (E_1) \) and \( b_p := a_p (E_2) \).  
For a given $p$, it is easy to see that the above system is satisfied for a squarefree $D$ and $m,n$ natural numbers if and only if 
$(4p-a_p^2)(4p-b_p^2)$ is a square. 
To see this, we note that if 
\begin{equation}
\begin{split}
4p-a_p^2 = & D_1m^2\\
4p-b_p^2 = & D_2n^2
\end{split}
\end{equation}
with $D_1$ and $D_2$ square-free, then 
$$
\left(4p-a_p^2\right)\left(4p-b_p^2\right)=D_1D_2m^2n^2
$$
is a square if and only of $D_1=D_2$. 
Hence,
$$
S(E_1,E_2;x) =\#  \left\{ p \le x\ :\ p \ndivides N, \ (4p-a_p^2)(4p-b_p^2)=\Box\right\},
$$
where $N=N_1N_2$ is the product of the conductors $N_1$ and $N_2$ of $E_1$ and $E_2$, respectively. 
We detect squares in the multiset
\begin{equation} \label{ourmulti}
\mathcal{A}:=\{(4p-a_p^2)(4p-b_p^2)\ :\  p\le x, \ p \ndivides N \}
\end{equation}
by using a version of Heath-Brown's square sieve. The version 1 below, which is a consequence  of Theorem 1 in \cite{HB}, is best suited for obtaining a conditional bound under
GRH. 
The version 2, as stated in section \ref{unconditional}, is better suited for an unconditional version
(without assuming GRH) of the theorem. 

\begin{lemma}[square sieve - version 1] \label{squaresieve}
Let $\mathcal{A}$ be a multiset of positive integers, 
and let $\mathcal{P}$ be a set of $P$ distinct primes. Set
\[
S(\mathcal{A}) := \# \{ \alpha \in  \mathcal{A}\ :\ \alpha \mbox{ is a square}\}
\]
and suppose that
\begin{equation} \label{uppercond}
\max\limits_{n\in \mathcal{A}} n \le e^P.
\end{equation}
Then
\begin{equation} \label{oldS}
\# S(\mathcal{A}) \ll \frac{\# \mathcal{A}}{P} +\frac{1}{P^2}\cdot  
\sum\limits_{\substack{q_1,q_2\in \mathcal{P}\\ q_1\not=q_2}} \left|\sum\limits_{n \in \mathcal{A}} \left(\frac{n}{q_1q_2}\right)\right|,
\end{equation}
where $\left(\frac{n}{q_1q_2}\right)$ is the Jacobi symbol.
\end{lemma}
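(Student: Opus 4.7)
The plan is to carry out Heath-Brown's classical square-sieve argument, with the hypothesis \eqref{uppercond} entering only to control ``gcd losses'' from primes of $\mathcal{P}$ that divide elements of $\mathcal{A}$. The starting observation is that for every odd prime $q$ and every positive integer $n$ with $q\nmid n$ one has $\left(\frac{n}{q}\right)=+1$ whenever $n$ is a perfect square. Hence, for each square $n\in\mathcal{A}$,
$$\sum_{q\in\mathcal{P}}\left(\frac{n}{q}\right) \;=\; P - \omega_{\mathcal{P}}(n), \qquad \omega_{\mathcal{P}}(n):=\#\{q\in\mathcal{P}:q\mid n\}.$$

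First I would use \eqref{uppercond} to show that $\omega_{\mathcal{P}}(n)$ is negligible compared with $P$. Since any positive integer $m\le e^{P}$ has at most $O(\log m/\log\log m)=O(P/\log P)$ distinct prime divisors, we have $\omega_{\mathcal{P}}(n)\le P/2$ for every $n\in\mathcal{A}$ once $P$ is large enough (the opposite, bounded regime is absorbed into the implied constant). Squaring and summing over $n \in \mathcal A$ then yields the one-sided estimate
$$\frac{P^{2}}{4}\,S(\mathcal{A}) \;\le\; \sum_{n\in\mathcal{A}}\left(\sum_{q\in\mathcal{P}}\left(\frac{n}{q}\right)\right)^{\!2},$$
since the non-square terms on the right are non-negative and may simply be discarded.

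Next I would expand the square, interchange the order of summation, and use multiplicativity of the Jacobi symbol to rewrite the right-hand side as
$$\sum_{q_{1},q_{2}\in\mathcal{P}}\;\sum_{n\in\mathcal{A}}\left(\frac{n}{q_{1}q_{2}}\right).$$
The $P$ diagonal terms with $q_{1}=q_{2}$ each give $\sum_{n\in\mathcal{A},\,q_{1}\nmid n}1 \le \#\mathcal{A}$, so the diagonal contribution is at most $P\cdot\#\mathcal{A}$. The remaining off-diagonal contribution is exactly the quantity appearing in \eqref{oldS}. Collecting the two estimates and dividing through by $P^{2}/4$ yields the claimed bound.

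I do not expect any serious obstacle: the proof is little more than the observation ``squares give $+1$'' combined with the elementary fact that integers of size at most $e^{P}$ cannot have nearly $P$ distinct prime factors. The one place where care is needed is the lower-bound step, which is precisely where \eqref{uppercond} is used; alternatively one may quote Theorem~1 of \cite{HB} directly.
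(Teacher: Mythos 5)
The paper offers no proof of this lemma; it is stated as a direct consequence of Theorem~1 in Heath-Brown \cite{HB}. Your argument is a correct and complete reconstruction of that theorem's proof, with the key points handled properly: hypothesis \eqref{uppercond} is used exactly once, to guarantee $\omega_{\mathcal P}(n)\ll P/\log P\le P/2$ for $P$ sufficiently large (the bounded-$P$ case being absorbed into the implied constant, as you note), whence every square $n\in\mathcal A$ satisfies $\sum_{q\in\mathcal P}\left(\frac{n}{q}\right)=P-\omega_{\mathcal P}(n)\ge P/2$; expanding $\sum_{n\in\mathcal A}\bigl(\sum_{q\in\mathcal P}\left(\frac{n}{q}\right)\bigr)^2$ then gives the diagonal bound $\le P\cdot\#\mathcal A$ and the off-diagonal sum appearing in \eqref{oldS}, and dividing by $P^2/4$ yields the claim with implied constant~$4$. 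The only point worth flagging, which is also left implicit in the paper's statement, is that the Jacobi symbol $\left(\frac{n}{q_1q_2}\right)$ presupposes $q_1,q_2$ odd; this is automatic in the paper's application, since $\mathcal P\subset(z/2,z]$ with $z\to\infty$.
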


We now apply this lemma with $\mathcal{A}$ as in \eqref{ourmulti},
\begin{equation} \label{Pdef}
 \mathcal{P} := \{ q\in \Sigma_\Q  : \ z/2< q \le z \} \quad \mbox{and} \quad P:=\#\mathcal{P},
 \end{equation}
where $\Sigma_\Q$ denotes the set of primes of \( \Q \) and $z$ is a positive parameter satisfying
 \begin{equation} \label{zcond}
 e^P=e^{\pi(z)-\pi(z/2)}>x
 \end{equation}
 which will be suitably chosen later. It follows that 
\begin{equation} \label{basicbound}
S(E_1,E_2;x)\ll 1+\frac{1}{P} \cdot  (\pi(x)-\omega(N)) +\frac{1}{P^2} \cdot \sum\limits_{\substack{q_1,q_2\in \mathcal{P}\\ q_1\not=q_2}} \left|\sum\limits_{\substack{p\le x\\
 p\not\ \! | q_1q_2N}} \left(\frac{(4p-a_p^2)(4p-b_p^2)}{q_1q_2}\right)\right|,
 \end{equation}
where $\omega(N)$ is the numbers of distinct prime factors of $N$. 
 Moreover, by the prime number theorem,
 \begin{equation} \label{primenumber}
 P\sim \frac{z}{2\log z} \quad \mbox{and} \quad \pi(x)-\omega(N) \sim \frac{x}{\log x}.
 \end{equation}
 Thus, to ensure that \eqref{zcond} is satisfied for large enough $x$, it suffices that
 \begin{equation} \label{zcond1}
 z> (\log x)^{1+\varepsilon},
 \end{equation}
 which we assume from now on. 
 
 We now estimate the character sum in \eqref{basicbound}. We break the summation into residue classes as follows.
 \begin{equation} \label{red}
 \sum\limits_{\substack{p\le x\\
 p\not\ \! | q_1q_2N}} \left(\frac{(4p-a_p^2)(4p-b_p^2)}{q_1q_2}\right) = \sum\limits_{\substack{d=1\\ (d,q_1q_2)=1}}^{q_1q_2} \sum\limits_{s=1}^{q_1q_2}
 \sum\limits_{t=1}^{q_1q_2}  \left(\frac{(4d-s^2)(4d-t^2)}{q_1q_2}\right) \cdot \pi\left(x; d,s,t\right),
 \end{equation}
where
\begin{equation} 
\begin{split}
\pi(x;d,s,t):=\#\{p\le x\ :\  & p  \ndivides N, \ p\equiv d\bmod{q_1q_2}, \\ &  a_p\equiv s\bmod{q_1q_2}, \ b_p \equiv t\bmod{q_1q_2}\}.  
\end{split}
\end{equation}
The task is now to evaluate $\pi(x;d,s,t)$ asymptotically. 

We first recall some group theoretical results from \cite{CFM}. We will then apply versions of 
the Chebotarev density theorem (under GRH or without) and obtain our results. 

\section{Application of Chebotarev density theorem under the GRH} \label{applichebo}
For $i=1,2$ let $E_i[k]$ be the group of $k$-division points 
of $E_i$ and $F_k^i:=\mathbb{Q}(E_i[k])$ be the field obtained by adjoining to $\mathbb{Q}$ the $x$ and the $y$ coordinates of $k$-division points. Let
\begin{equation} \label{iso}
G_k^i:=\mbox{Gal}\left(F_k^i / \mathbb{Q}\right).
\end{equation}
The following facts were used in \cite{CFM} and are of importance here as well. 
The action of \( \Gal ( \Qbar / \Q ) \) on the \( k \)-division points on \( E_i \) gives rise to the natural Galois representations 
\begin{equation} \label{galrep}
	\phi_k^{i}\ :\ G_k^i \rightarrow \GL_2(\mathbb{Z}/k\mathbb{Z}) .
\end{equation}
which are injective for every $k\in \mathbb{N} $. By \cite[Theorem 2 on page 37 and Theorem 3 on page 42]{Se2}, it follows that for elliptic curves \( E_i \) 
without complex multiplication, there exist constants \( C(E_i) \) depending on \( E_i \) such that \( \phi_k^{i} \) are surjective whenever 
\( (k, C(E_i))=1 \). 

In our applications, $k=q_1q_2$, and so the condition $(q_1q_2, C(E_i))=1$
will be satisfied for $i=1,2$ if $x$ is large enough since we have 
assumed that
$z/2 < q_1,q_2\le z$ and $z > (\log x)^{1+\varepsilon}$. Hence, we have 
\[
G_{q_1q_2}^i \cong  \GL_2(\mathbb{Z}/q_1q_2\mathbb{Z}),   ~~~~~~ \mbox{if} ~x ~\textrm{is large enough}.
\]
Further, $\phi_k^i$ has the properties that 
\[
\mbox{\rm tr}(\phi_k^1(\sigma_p^1))=a_p, \quad \mbox{\rm tr}(\phi_k^2(\sigma_p^2))=b_p, \quad \mbox{\rm det}(\phi_k^1(\sigma_p^1))=p=
\mbox{\rm det}\left(\phi_k^2(\sigma_p^2)\right),
\]
where $\sigma_p^i$ is the Frobenius conjugacy class associated to $p$ in $\Gal ( F_k^i / \Q  ) $.
Equipped with the above, we are now ready for an application of the Chebotarev density theorem under GRH (see \cite[Theorem 2.3]{CFM}). 

\begin{theorem}[Chebotarev density theorem under GRH] \label{chebo1} 
Let $L $ be a finite Galois extension of $\Q$ with Galois group $G$. Let $n_L$ be the degree of $L$ over $\Q$ and $d_L$ its discriminant. Let $C$ be a union of conjugacy classes of 
$G$. Define 
\[
\pi_C (x, L, \mathbb{Q}) := \# \{p \le x\ : \ p \mbox{ unramified in } L / \Q,\  \sigma_{p} \subseteq C \},
\]
where $\sigma_p$ is the Frobenius conjugacy class associated to \( p \) in the extension $L / \Q $. Then, under the GRH for Dedekind zeta functions, 
we have
$$
\pi_C(x,L, \mathbb{Q}) = \frac{\# C}{\# G}\cdot \mbox{\rm li}(x) + 
O\left(\# C \cdot x^{1/2}\left(\log x+ \frac{\log d_L}{n_L}\right)\right),
$$
where the implicit constants are absolute. 
\end{theorem}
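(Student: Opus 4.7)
The plan is to follow the classical Lagarias--Odlyzko strategy, expressing $\pi_C(x,L,\Q)$ through character sums and invoking the explicit formula for Artin $L$-functions under GRH. First, fixing a representative $g\in C$, I would use orthogonality of irreducible characters of $G$ to write
\[
\mathbf{1}_{\{\sigma_p\subseteq C\}}=\frac{|C|}{|G|}\sum_{\chi\in\widehat{G}}\overline{\chi(g)}\,\chi(\sigma_p),
\]
so that $\pi_C(x,L,\Q)$ decomposes as a weighted sum over the irreducible characters $\chi$ of $G$ of the Chebyshev-type sums $\psi(x,\chi):=\sum_{p^m\le x}\chi(\sigma_{p^m})\log p$. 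Partial summation converts each $\psi(x,\chi)$ back into a prime-counting quantity, with the trivial character $\chi=1$ supplying the $\mathrm{li}(x)$ main term and every nontrivial character contributing only to the error.

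Each $\psi(x,\chi)$ I would analyze via the Artin $L$-function $L(s,\chi,L/\Q)$. By Brauer's induction theorem, $L(s,\chi,L/\Q)$ is a product/quotient of Hecke $L$-functions attached to one-dimensional characters of subgroups of $G$, so the hypothesis that GRH holds for all Dedekind zeta functions (and hence for Hecke $L$-functions) forces every nontrivial zero $\rho$ of $L(s,\chi,L/\Q)$ onto the critical line $\mathrm{Re}(s)=1/2$. The standard Mellin--Perron explicit formula then yields, for a truncation parameter $T\ge 2$,
\[
\psi(x,\chi)=\delta_{\chi,1}\,x-\sum_{|\mathrm{Im}(\rho)|\le T}\frac{x^\rho}{\rho}+O\!\left(\frac{x\bigl(\log(xA(\chi))\bigr)^2}{T}\right),
\]
where $A(\chi)$ denotes the Artin conductor. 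Under GRH, $|x^\rho/\rho|\le 2x^{1/2}/|\rho|$, and the Riemann--von Mangoldt style estimate bounds the number of zeros with $|\mathrm{Im}(\rho)|\le T$ by $\ll \chi(1)\bigl(\log A(\chi)+n_L\log T\bigr)T$. Optimizing $T\asymp x^{1/2}$ then gives a per-character bound of shape $\chi(1)\,x^{1/2}(\log x+\log A(\chi))$ up to negligible logarithmic factors.

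Finally I would sum back over characters, using $|\chi(g)|\le\chi(1)$ together with the conductor--discriminant formula $\log d_L=\sum_{\chi}\chi(1)\log A(\chi)$ to collapse the individual $\log A(\chi)$ terms into a single $\log d_L$ contribution, while the Plancherel identity $\sum_{\chi}\chi(1)^2=|G|$ controls the combinatorial weight. Combined with the orthogonality prefactor $|C|/|G|$, this yields an error term of the desired shape $\#C\cdot x^{1/2}(\log x+(\log d_L)/n_L)$. The principal difficulty, and the reason the unconditional version is much weaker, is obtaining uniform control over the full family of characters at once: one must ensure that no zeros lurk close to $s=1$, so that the $x^{1/2}$ saving genuinely survives after summation, and it is Brauer induction that supplies the nontrivial bridge transferring GRH for Dedekind zeta functions to GRH for the possibly nonabelian $L(s,\chi,L/\Q)$.
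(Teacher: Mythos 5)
The paper does not prove this statement; it quotes it directly from Cojocaru--Fouvry--Murty (Theorem~2.3 of \cite{CFM}), who attribute it to the effective Chebotarev theorem of Lagarias--Odlyzko in the sharpened form due to Serre and Murty--Murty--Saradha. So there is no internal proof to compare against, and your outline must be judged on its own. The strategy you choose is one of the two standard ones: expand the indicator of $C$ in irreducible characters of $G$, push each $\chi$ through the explicit formula for $L(s,\chi,L/\Q)$, use Brauer induction so that GRH for Hecke $L$-functions (a consequence of GRH for Dedekind zeta functions) pins the zeros to $\mathrm{Re}\,s=1/2$, and recombine via the conductor--discriminant formula. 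The route actually taken by Lagarias--Odlyzko (and by Serre) is slightly different: fix $g\in C$, pass to the cyclic extension $L/L^{\langle g\rangle}$, and work only with the one-dimensional characters of $\langle g\rangle$; this sidesteps Artin $L$-functions entirely and needs only GRH for $\zeta_L$. Your route requires the somewhat stronger hypothesis, but since the theorem assumes GRH for all Dedekind zeta functions, the choice of route is legitimate.

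However, the bookkeeping as you have written it does not deliver the stated error term, and the discrepancy is not one of ``negligible logarithmic factors.'' Your zero-count $N(T,\chi)\ll \chi(1)\bigl(\log A(\chi)+n_L\log T\bigr)T$ is not correct: $L(s,\chi,L/\Q)$ is an $L$-function of degree $\chi(1)$ over $\Q$, so the Riemann--von Mangoldt estimate reads $N(T,\chi)\ll T\bigl(\log A(\chi)+\chi(1)\log T\bigr)$ --- the degree $\chi(1)$ (not $n_L$) multiplies $\log T$, and there should be no overall $\chi(1)$ prefactor. More importantly, the per-character error you state, ``of shape $\chi(1)\,x^{1/2}(\log x+\log A(\chi))$,'' distributes $\chi(1)$ over both terms, whereas the correct shape is $x^{1/2}\bigl(\chi(1)\log x+\log A(\chi)\bigr)$, with $\chi(1)$ on the $\log x$ part only. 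That distinction is exactly what makes the final summation close: with your version, after applying $|\chi(g)|\le\chi(1)$ and summing you are left with $\sum_\chi \chi(1)^2\log A(\chi)$, which the conductor--discriminant formula does not control (it gives $\sum_\chi\chi(1)\log A(\chi)=\log d_L$, one power of $\chi(1)$ short, and the gap can be as large as $\sqrt{|G|}$). With the corrected per-character bound one gets $\sum_\chi|\chi(g)|\bigl(\chi(1)\log x+\log A(\chi)\bigr)\le |G|\log x+\log d_L$, and after multiplying by the prefactor $|C|/|G|$ and using $|G|=n_L$ this is precisely $\#C\cdot\bigl(\log x+\tfrac{\log d_L}{n_L}\bigr)$, as required.
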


We wish to apply the above theorem for  $L:=F_{q_1q_2}^1F_{q_1q_2}^2$. Let
\( H_{q_1q_2} \) be the Galois group \( \Gal ( L / \Q ) \).  
We now use Serre's results on the image of \( \Gal ( \Qbar / \Q ) \) under the 
product of \( \ell \)-adic representations attached to two non-isogenenous 
elliptic curves (both without complex multiplication). In fact, \cite[Corollary 2, page 324]{Se2} implies that   
\begin{equation*}
H_{q_1q_2} = \left\{  (g_1,g_2) \in G_{q_1q_2}^1 \times G_{q_1q_2}^2  \ :\   \mbox{det} ~ \phi_{q_1q_2}^1 (g_1) = \mbox{det} ~ \phi_{q_1q_2}^2 (g_2) \right\},
\end{equation*}
with $\phi_{q_1q_2}^i$ $(i=1,2)$ as in \eqref{galrep}.
Let \( C \) be the following conjugacy class in \( G = H_{q_1q_2}  \). 
\begin{equation*}
\begin{split}
C := C_{q_1q_2}(s,t,d):=\big\{(g_1,g_2)\in G_{q_1q_2}^1\times G_{q_1q_2}^2\ :\  &  \mbox{det } \phi_{q_1q_2}^1(g_1)=d= \mbox{det }\phi_{q_1q_2}^2(g_2), \\  &  
\mbox{tr } \phi_{q_1q_2}^1(g_1)=s, \ \mbox{tr } \phi_{q_1q_2}^2(g_2)=t\big\}.
\end{split}
\end{equation*}
Applying Theorem \ref{chebo1}, we get
\begin{equation} \label{CheboGRH}
\pi(x;d,s,t)=\frac{\# C_{q_1q_2}(s,t,d)}{\# H_{q_1q_2}} \cdot \mbox{li}(x) + 
O\left(\# C_{q_1q_2}(s,t,d) x^{1/2}\left(\log x + \frac{\log d_L}{n_L}\right)\right)
\end{equation}
under GRH. 

Combining \eqref{red} and \eqref{CheboGRH}, and taking into account that
\begin{equation} \label{sum}
\sum\limits_{\substack{d=1\\ (d,q_1q_2)=1}}^{q_1q_2} \sum\limits_{s=1}^{q_1q_2}
\sum\limits_{t=1}^{q_1q_2} \# C_{q_1q_2}(s,t,d) = \# H_{q_1q_2},
\end{equation}
we have
\begin{equation} \label{appli}
\begin{split}
& \sum\limits_{\substack{p\le x\\
p  \ndivides q_1q_2N}} \left(\frac{(4d-a_p^2)(4d-b_p^2)}{q_1q_2}\right) \\ = & \mbox{li}(x)\cdot 
\sum\limits_{\substack{d=1\\ (d,q_1q_2)=1}}^{q_1q_2} \sum\limits_{s=1}^{q_1q_2}
\sum\limits_{t=1}^{q_1q_2}  \left(\frac{(4d-s^2)(4d-t^2)}{q_1q_2}\right) \cdot 
\frac{\# C_{q_1q_2}(s,t,d)}{\# H_{q_1q_2}} +\\ & O\left(\# H_{q_1q_2}x^{1/2}
\left(\log x +\frac{\log d_L}{n_L}\right)\right).
\end{split}
\end{equation}

\section{Counting} \label{count}
We now compute \( \# C_{q_1q_2}(s,t,d) \) and \( \# H_{q_1q_2} \) that appear on the right hand side of equation \eqref{appli}.

Since, for $q_1,q_2$ large enough, 
\[
\phi_{q_1q_2}^i\ : \ F_{q_1q_2}^i \rightarrow \mbox{GL}_2\left(\mathbb{Z}/ q_1q_2\mathbb{Z}\right), \quad i=1,2 
\]
are isomorphisms, we have 
\begin{equation} \label{noC}
\begin{split}
\# C_{q_1q_2}(s,t,d)=\# \big\{(A_1,A_2)\in \mbox{GL}_2\left(\mathbb{Z}/ q_1q_2\mathbb{Z}\right)^2 \ :\   & \mbox{det} (A_1)=d= \mbox{det} (A_2), \\ &
\mbox{tr}(A_1)=s, \ \mbox{tr}(A_2)=t\big\}
\end{split}
\end{equation}
and 
\begin{equation} \label{noH}
\# H_{q_1q_2}=\# \big\{(A_1,A_2)\in \mbox{GL}_2\left(\mathbb{Z}/ q_1q_2\mathbb{Z}\right)^2 \ :\   \mbox{det} (A_1)= \mbox{det} (A_2)\big\}.
\end{equation}

We now use Corollary 2.8 from \cite{CFM} which states the following.

\begin{corollary} \label{hungry} Let \(q_1\) and \(q_2\) be two distinct odd primes, and \(d, t \in \Z/q_1q_2\Z\) be fixed with
\((d, q_1q_2) = 1\). Then
\[
\begin{split}
& \#\{g \in {\rm GL}_2(\Z/q_1q_2\Z)\ :\ \mbox{\rm det}(g) = d, \ \mbox{\rm tr}(g) = t\}\\ = & q_1q_2\left(q_1+\left(\frac{t^2-4d}{q_1}\right)\right)
\left(q_2+\left(\frac{t^2-4d}{q_2}\right)\right).
\end{split}
\]
\end{corollary}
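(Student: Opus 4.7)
The plan is to reduce the problem to a single prime via the Chinese Remainder Theorem and then perform a direct counting argument on $\mathrm{GL}_2(\F_q)$. Since $q_1,q_2$ are distinct odd primes, the CRT gives a ring isomorphism $\Z/q_1q_2\Z \cong \F_{q_1}\times \F_{q_2}$, which induces an isomorphism
$$
\mathrm{GL}_2(\Z/q_1q_2\Z) \cong \mathrm{GL}_2(\F_{q_1})\times \mathrm{GL}_2(\F_{q_2})
$$
that respects trace and determinant componentwise. The assumption $(d,q_1q_2)=1$ ensures that the images $d_i := d \bmod q_i$ are units, so the count on the left factors as a product of the counts on each $\mathrm{GL}_2(\F_{q_i})$. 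It therefore suffices to establish, for an odd prime $q$ and $d,t\in\F_q$ with $d\neq 0$, the identity
$$
N(q,d,t) := \#\bigl\{g\in\mathrm{GL}_2(\F_q) : \det(g)=d,\ \mathrm{tr}(g)=t\bigr\} = q\Bigl(q + \Bigl(\tfrac{t^2-4d}{q}\Bigr)\Bigr).
$$

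For the single-prime count, I parametrize $g=\bigl(\begin{smallmatrix}a & b\\ c & e\end{smallmatrix}\bigr)$ by the entry $a\in\F_q$. The trace condition forces $e=t-a$, and the determinant condition becomes $bc = a(t-a)-d = -(a^2-ta+d)$. Let $f(a) := a^2-ta+d$, whose discriminant is precisely $\Delta := t^2-4d$. For a fixed value $k\in\F_q$, the number of pairs $(b,c)\in\F_q^2$ with $bc=k$ equals $q-1$ if $k\neq 0$ and $2q-1$ if $k=0$. So I need to count the zeros of $f$ in $\F_q$; this number is
$$
M \;=\; 1+\Bigl(\tfrac{\Delta}{q}\Bigr),
$$
valid uniformly: $M=2$ if $\Delta$ is a nonzero square, $M=0$ if $\Delta$ is a nonsquare, and $M=1$ if $\Delta\equiv 0\pmod q$.

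Summing over $a\in\F_q$, I obtain
$$
N(q,d,t) = M(2q-1) + (q-M)(q-1) = q(q-1) + Mq = q\bigl(q-1+M\bigr) = q\Bigl(q+\Bigl(\tfrac{\Delta}{q}\Bigr)\Bigr),
$$
which is the desired identity. Multiplying the formulas for $q=q_1$ and $q=q_2$ via the CRT factorization yields the corollary.

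There is no serious obstacle here; the only subtlety is checking that the formula $M=1+\bigl(\tfrac{\Delta}{q}\bigr)$ correctly handles the ramified case $\Delta\equiv 0\pmod q$ (where the Legendre symbol is $0$ and $f$ has a unique double root), so that the three cases collapse into a single clean expression. Once that is noted, the entire argument is an elementary algebraic manipulation.
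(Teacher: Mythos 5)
Your proof is correct. The paper itself does not prove this statement; it simply quotes it as Corollary~2.8 of~\cite{CFM} and moves on. Your argument supplies a clean, self-contained verification: the CRT reduction to a single odd prime $q$ is valid because the isomorphism $\mathrm{GL}_2(\Z/q_1q_2\Z)\cong\mathrm{GL}_2(\F_{q_1})\times\mathrm{GL}_2(\F_{q_2})$ respects trace and determinant componentwise, and the hypothesis $(d,q_1q_2)=1$ guarantees $d\bmod q_i\neq 0$ on each factor, so the invertibility constraint is automatically enforced by the determinant condition. The single-prime count is also right: fixing $a$ determines $e=t-a$, the equation $bc=-(a^2-ta+d)$ has $q-1$ solutions when the right side is nonzero and $2q-1$ when it vanishes, the number of $a$ with $a^2-ta+d=0$ is $1+\bigl(\tfrac{t^2-4d}{q}\bigr)$ uniformly across the split, inert, and ramified cases, and the final algebra $M(2q-1)+(q-M)(q-1)=q\bigl(q-1+M\bigr)$ gives exactly $q\bigl(q+\bigl(\tfrac{t^2-4d}{q}\bigr)\bigr)$. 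This is the standard elementary derivation and presumably matches the one in~\cite{CFM}; there is nothing to flag.
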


Applying Corollary \ref{hungry}, we deduce that
\begin{equation*}
\begin{split}
\# C_{q_1q_2}(s,t,d)=q_1^2q_2^2 & \left(q_1+\left(\frac{s^2-4d}{q_1}\right)\right)\left(q_2+\left(\frac{s^2-4d}{q_2}\right)\right)\times\\ &
\left(q_1+\left(\frac{t^2-4d}{q_1}\right)\right)\left(q_2+\left(\frac{t^2-4d}{q_2}\right)\right).
\end{split}
\end{equation*}
Since $\big\{(A_1,A_2)\in \mbox{GL}_2\left(\mathbb{Z}/ q_1q_2\mathbb{Z}\right)^2 \ :\   \mbox{det} (A_1)= \mbox{det} (A_2)\big\}$ is the kernel of the epimorphism
$$
\sigma\ :\ \mbox{GL}_2\left(\mathbb{Z}/ q_1q_2\mathbb{Z}\right)^2 \rightarrow (\mathbb{Z}/q_1q_2\mathbb{Z})^{\ast}
$$
defined by
$$
\sigma(A_1,A_2)= \mbox{det}(A_1)\cdot \mbox{det}(A_2)^{-1},
$$
it follows that
\begin{equation*}
\begin{split}
\# H_{q_1q_2}= & \frac{\# \mbox{GL}_2\left(\mathbb{Z}/ q_1q_2\mathbb{Z}\right)^2}{\# (\mathbb{Z}/q_1q_2\mathbb{Z})^{\ast}}=
\frac{q_1^2(q_1-1)^2(q_1^2-1)^2q_2^2(q_2-1)^2(q_2^2-1)^2}{(q_1-1)(q_2-1)}\\
= & q_1^2(q_1-1)(q_1^2-1)^2q_2^2(q_2-1)(q_2^2-1)^2.
\end{split}
\end{equation*}
Hence,
\begin{equation} \label{this}
\begin{split}
\frac{\# C_{q_1q_2}(s,t,d)}{\# H_{q_1q_2}}= & \frac{1}{(q_1-1)(q_1^2-1)^2(q_2-1)(q_2^2-1)^2}\times\\ & \left(q_1+\left(\frac{s^2-4d}{q_1}\right)\right)\left(q_2+\left(\frac{s^2-4d}{q_2}\right)\right)\times\\ &
\left(q_1+\left(\frac{t^2-4d}{q_1}\right)\right)\left(q_2+\left(\frac{t^2-4d}{q_2}\right)\right)\\ = & 
\frac{q_1^2q_2^2}{(q_1-1)(q_1^2-1)^2(q_2-1)(q_2^2-1)^2}+O\left(\frac{1}{z^7}\right)
\end{split}
\end{equation}
if $z/2<q_1,q_2\le z$. This allows us to express the main term on the right-hand side of \eqref{appli} explicitly. 
For the estimations of the $O$-term in \eqref{appli} and an $O$-term occuring later in this paper, we 
prove the following estimates for $n_L$, the degree of $L / \Q $, and 
$d_L$, the discriminant of $L / \Q$.

\begin{lemma} \label{ndL}
Suppose that $z/2<q_1,q_2\le z$. Then we have 
$$
n_L\le z^{14}, \quad \frac{\log|d_L|}{n_L}\ll \log z,
$$
$$
\quad |d_L|^{1/n_L}\ll z^{16} \quad  \mbox{and} \quad
\log|d_L| \ll z^{14}\log z.
$$
\end{lemma}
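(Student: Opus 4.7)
The plan is to combine the explicit computation of $\#H_{q_1q_2}$ already carried out in Section \ref{count} with a standard bound on the discriminant of a number field in terms of its degree and the set of rational primes that ramify in it. Since $L = F^1_{q_1q_2} F^2_{q_1q_2}$ and $\mathrm{Gal}(L/\Q) = H_{q_1q_2}$, the degree is $n_L = \#H_{q_1q_2}$, and from the formula
$$\#H_{q_1q_2} = q_1^2(q_1-1)(q_1^2-1)^2 q_2^2(q_2-1)(q_2^2-1)^2$$
together with the hypothesis $q_1, q_2 \le z$, the bound $n_L \le z^{14}$ follows immediately, since each factor $q_i^2(q_i-1)(q_i^2-1)^2$ is bounded by $z^2 \cdot z \cdot z^4 = z^7$.

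For the discriminant, I would first observe that $L$ is obtained from $\Q$ by adjoining the $q_1q_2$-division points of $E_1$ and $E_2$, so by standard facts about division fields of elliptic curves the primes that ramify in $L/\Q$ must divide $q_1 q_2 N_1 N_2$. In particular,
$$\sum_{p \in P(L/\Q)} \log p \le \log(q_1 q_2 N_1 N_2) \le 2 \log z + O(1),$$
where the $O(1)$ absorbs the contribution of the fixed conductors $N_1$ and $N_2$. Next, I would invoke the standard Hensel--Serre discriminant bound (cf.\ the proof of Proposition 5 in \cite{Se3})
$$\log|d_L| \le (n_L - 1) \sum_{p \in P(L/\Q)} \log p + n_L \log n_L,$$
which, combined with $\log n_L \le 14 \log z$, yields $\log|d_L|/n_L \ll \log z$.

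The remaining inequalities of the lemma are then purely formal. Exponentiating the root-discriminant bound gives $|d_L|^{1/n_L} \ll z^{16}$, where the exponent $16$ is chosen generously enough to absorb all implicit constants (coming from $N_1 N_2$ and from the $n_L \log n_L$ term). Multiplying the same root-discriminant bound by $n_L \le z^{14}$ gives $\log|d_L| \ll z^{14} \log z$. The only real subtlety is to apply the Hensel--Serre estimate with explicit absolute constants and to verify that every ramified prime is indeed captured by the divisors of $q_1 q_2 N_1 N_2$; neither point is delicate, so I do not foresee any substantive obstacle beyond careful bookkeeping of implicit constants.
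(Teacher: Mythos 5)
Your proposal is correct and follows essentially the same route as the paper's own proof: compute $n_L = \#H_{q_1q_2}$ from the explicit formula, apply the Hensel-type discriminant bound $\log|d_L|\le n_L\log n_L + (n_L-1)\sum_{p \text{ ram.}}\log p$ (the paper cites Lemma 2.6 of \cite{CFM} rather than \cite{Se3}, but it is the same estimate), use N\'eron--Ogg--Shafarevich to restrict ramified primes to divisors of $q_1q_2N$, and then read off the remaining bounds by exponentiating and multiplying by $n_L$. The only cosmetic difference is that the paper tracks the exponent $16$ explicitly via $8\log(q_1q_2)+\log N$ rather than folding it into an $O(1)$, but the arithmetic is the same.
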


\begin{proof}
Clearly,
\begin{equation} \label{lognL}
n_L= \# H_{q_1q_2}= q_1^2(q_1-1)(q_1^2-1)^2q_2^2(q_2-1)(q_2^2-1)^2\ll z^{14}.
\end{equation}
By a lemma of Hensel (see Lemma 2.6 in \cite{CFM}), we have
\[
\log |d_L| \le n_L\log n_L+(n_L-1)\sum\limits_{p\ \mbox{\scriptsize ramified in } L} \log p
\]
which implies that
\begin{equation} \label{NOS}
\begin{split}
\frac{\log|d_L|}{n_L}\le \log n_L + \log(q_1q_2N) \le 8\log(q_1q_2)+\log(N), 
\end{split}
\end{equation}
where we use the fact that if $p$ ramifies in $L$, then $p|(q_1q_2N)$, which follows from the N\'eron-Ogg-Shafarevich
criterion. From \eqref{NOS}, we deduce that
$$
|d_L|^{1/n_L}= \exp\left(\frac{\log|d_L|}{n_L}\right)\le N(q_1q_2)^8\ll z^{16},
$$
and from \eqref{lognL} and \eqref{NOS}, we deduce that
$$
\log|d_L| \le n_L (\log n_L + \log(q_1q_2N)) \ll z^{14}\log z.
$$
\end{proof}

Plugging \eqref{this} into \eqref{appli} and using Lemma \ref{ndL} 
gives
\begin{equation} \label{appli2}
\begin{split}
& \sum\limits_{\substack{p\le x\\
p \ndivides q_1q_2N}} \left(\frac{(4d-a_p^2)(4d-b_p^2)}{q_1q_2}\right) \\ = & \mbox{li}(x)\cdot \frac{q_1^2q_2^2}{(q_1-1)(q_1^2-1)^2(q_2-1)(q_2^2-1)^2} \cdot
\sum\limits_{\substack{d=1\\ (d,q_1q_2)=1}}^{q_1q_2} \sum\limits_{s=1}^{q_1q_2}
\sum\limits_{t=1}^{q_1q_2}  \left(\frac{(4d-s^2)(4d-t^2)}{q_1q_2}\right)  +\\ & 
O\left(\frac{\mbox{li}(x)}{z}+x^{1/2}z^{14}\log(xz)\right).
\end{split}
\end{equation}

\section{Evaluation of character sums}
Next, we evaluate the character sums above. We have
\begin{equation} \label{into}
\begin{split} 
& \sum\limits_{\substack{d=1\\ (d,q_1q_2)=1}}^{q_1q_2} \sum\limits_{s=1}^{q_1q_2}
\sum\limits_{t=1}^{q_1q_2}  \left(\frac{(4d-s^2)(4d-t^2)}{q_1q_2}\right) = \sum\limits_{\substack{d=1\\ (d,q_1q_2)=1}}^{q_1q_2} \left(\sum\limits_{u=1}^{q_1q_2}
\left(\frac{4d-u^2}{q_1q_2}\right)\right)^2\\ = & \sum\limits_{\substack{d=1\\ (d,q_1)=1}}^{q_1} \left(\sum\limits_{v=1}^{q_1}
\left(\frac{4d-v^2}{q_1}\right)\right)^2\cdot \sum\limits_{\substack{d=1\\ (d,q_2)=1}}^{q_2} \left(\sum\limits_{w=1}^{q_2}
\left(\frac{4d-w^2}{q_2}\right)\right)^2.
\end{split}
\end{equation}
For an odd prime $q$ with $(d,q)=1$, we may write
\begin{equation*} 
\begin{split}
& \sum\limits_{x \bmod{q}} \left(\frac{4d-x^2}{q}\right) = \frac{1}{2}\cdot \sum\limits_{\substack{y \bmod{q}\\  y \not \equiv 0 \bmod{q}}} \left(1+\left(\frac{y}{q}\right)\right) \left(\frac{4d-y}{q}\right) + \left(\frac{4d}{q}\right)\\
= & \frac{1}{2}\cdot \sum\limits_{\substack{y \bmod{q}\\  y\not\equiv 0 \bmod{q}}} \left(\frac{4d-y}{q}\right) + \frac{1}{2}\cdot \sum\limits_{\substack{y \bmod{q}\\  y\not\equiv 0 \bmod{q}}} \left(\frac{y}{q}\right) \left(\frac{4d-y}{q}\right) + \left(\frac{4d}{q}\right)\\
= & \frac{1}{2}\cdot \sum\limits_{\substack{y \bmod{q}}} \left(\frac{4d-y}{q}\right) + \frac{1}{2}\cdot 
\sum\limits_{\substack{y \bmod{q}}} \left(\frac{y}{q}\right) \left(\frac{4d-y}{q}\right) + \frac{1}{2}\cdot \left(\frac{4d}{q}\right).
\end{split}
\end{equation*}
Using the orthogonality relations for Dirichlet characters, the first sum in the last line equals 0, and hence, the above simplifies into
\begin{equation} \label{cc1}
\sum\limits_{x \bmod{q}} \left(\frac{4d-x^2}{q}\right) 
= \frac{1}{2}\cdot \sum\limits_{\substack{y \bmod{q}}} \left(\frac{y}{q}\right) \left(\frac{4d-y}{q}\right) + 
\frac{1}{2}\cdot \left(\frac{4d}{q}\right).
\end{equation}
Now we reduce the right-hand side to Jacobi sums by writing
\begin{equation} \label{cc2} 
\begin{split}
& \frac{1}{2}\cdot \sum\limits_{\substack{y \bmod{q}}} \left(\frac{y}{q}\right) \left(\frac{4d-y}{q}\right) + 
\frac{1}{2}\cdot \left(\frac{4d}{q}\right)\\
= & \frac{1}{2}\cdot \left(\frac{4d}{q}\right)^2 \cdot \sum\limits_{\substack{y \bmod{q}}} \left(\frac{y\overline{4d}}{q}\right) \left(\frac{1-y\overline{4d}}{q}\right) + 
\frac{1}{2}\cdot \left(\frac{4d}{q}\right)\\
= & \frac{1}{2}\cdot \sum\limits_{\substack{z \bmod{q}}} \left(\frac{z}{q}\right) \left(\frac{1-z}{q}\right)+ 
\frac{1}{2}\cdot \left(\frac{d}{q}\right).
\end{split}
\end{equation} 
The last line is evaluated to be
\begin{equation} \label{cc3}
\frac{1}{2}
\cdot \sum\limits_{\substack{z \bmod{q}}} \left(\frac{z}{q}\right) \left(\frac{1-z}{q}\right)+ \frac{1}{2}\cdot \left(\frac{d}{q}\right)
= \frac{1}{2}\cdot \left(-\left(\frac{-1}{q}\right)+\left(\frac{d}{q}\right)\right) \in \{-1,0,1\},
\end{equation}
where we apply the following well-known evaluation of Jacobi sums defined as 
$$
J(\chi, \lambda):=\sum\limits_{a+b=1} \chi(a)\lambda(b),
$$
for the special case $\lambda=\chi^{-1}=(\cdot/q)$.

\begin{lemma} We have
\[
J(\chi, \chi^{-1})=-\chi(-1).
\]
\end{lemma}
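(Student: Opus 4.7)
The plan is to evaluate $J(\chi,\chi^{-1})=\sum_{a+b=1}\chi(a)\chi^{-1}(b)$ by a direct change of variables that reduces it to a character sum over $\FF_q^{*}$, which is then handled by orthogonality. With the standard convention $\chi(0)=0$, the terms corresponding to $a=0$ or $b=0$ (i.e., $a=1$) contribute nothing, so I restrict the summation to $a\in\FF_q\setminus\{0,1\}$, where both $a$ and $1-a$ are units. On this range the summand can be combined into a single factor $\chi(a)\chi^{-1}(1-a)=\chi\bigl(a/(1-a)\bigr)$.

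The key step is the substitution $t=a/(1-a)$, whose inverse is $a=t/(1+t)$. A short verification shows that this is a bijection from $\FF_q\setminus\{0,1\}$ onto $\FF_q\setminus\{0,-1\}$: the value $t=0$ corresponds to $a=0$, the value $t=-1$ makes $a$ undefined, and $a=1$ is never attained since $t/(1+t)=1$ has no solution. Consequently,
\[
J(\chi,\chi^{-1})=\sum_{\substack{t\in\FF_q\\ t\ne 0,-1}} \chi(t) \;=\; \left(\sum_{t\in\FF_q^{*}} \chi(t)\right) \;-\; \chi(-1).
\]
In the application, $\chi=(\cdot/q)$ is the nontrivial quadratic character, so orthogonality of characters gives $\sum_{t\in\FF_q^{*}}\chi(t)=0$, and hence $J(\chi,\chi^{-1})=-\chi(-1)$, as claimed.

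There is essentially no technical obstacle: the only point requiring care is bookkeeping for the substitution, ensuring that $t=-1$ is correctly identified as the one missing value in $\FF_q^{*}$ and therefore accounts precisely for the $-\chi(-1)$ term. The argument actually works for any nontrivial multiplicative character of $\FF_q^{*}$, which is more than sufficient for the quadratic case needed in the preceding computation of $\sum_{z\bmod q}\left(\frac{z}{q}\right)\left(\frac{1-z}{q}\right)$.
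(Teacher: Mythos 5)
Your proof is correct: the substitution $t=a/(1-a)$ is a bijection from $\FF_q\setminus\{0,1\}$ to $\FF_q\setminus\{0,-1\}$, and orthogonality for the nontrivial character $\chi$ then yields $J(\chi,\chi^{-1})=-\chi(-1)$. The paper itself gives no argument and simply cites Ireland--Rosen (Theorem 1(c), p.~93); the proof you wrote out is precisely the standard one found there, so you have, in effect, supplied the omitted details by the same route.
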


\begin{proof} This is \cite[page 93, Theorem 1(c)]{IrRo}.
\end{proof}

Combining \eqref{cc1}, \eqref{cc2} and \eqref{cc3}, we see that
$$
\sum\limits_{x \bmod{q}} \left(\frac{4d-x^2}{q}\right) \in \{-1,0,1\}.
$$
Plugging this into \eqref{into}, we deduce that
\begin{equation} \label{into2}
\begin{split} 
& \sum\limits_{\substack{d=1\\ (d,q_1q_2)=1}}^{q_1q_2} \sum\limits_{s=1}^{q_1q_2}
\sum\limits_{t=1}^{q_1q_2}  \left(\frac{(4d-s^2)(4d-t^2)}{q_1q_2}\right) \le (q_1-1)(q_2-1). 
\end{split}
\end{equation}

\section{Proof of Theorem \ref{thmconditional} } \label{conditional}

Plugging \eqref{into2} into \eqref{appli}, and using \eqref{this}, we get 
\begin{equation*} 
\begin{split}
& \sum\limits_{\substack{p\le x\\
p\ndivides q_1q_2N}} \left(\frac{(4d-a_p^2)(4d-b_p^2)}{q_1q_2}\right) \\ \le & \mbox{li}(x)\cdot \frac{q_1^2q_2^2}{(q_1^2-1)^2(q_2^2-1)^2} 
 + O\left(\frac{\mbox{li}(x)}{z}+x^{1/2}z^{14}\log(xz)\right)\\
 = & O\left(\frac{\mbox{li}(x)}{z}+x^{1/2}z^{14}\log(xz)\right)
\end{split}
\end{equation*}
if $z/2<q_1,q_2\le z$. Combining this with \eqref{basicbound} and \eqref{primenumber}, we deduce that
\[
S(E_1,E_2;x)\ll \frac{x/\log x}{z/\log z} +x^{1/2}z^{14}\log(xz). 
\]
Choosing $z:= x^{1/30}(\log x)^{-1/15}$, we obtain Theorem \ref{thmconditional}.

\section{Proof of Theorem \ref{thmunconditional} } \label{unconditional}

In the following, we modify the method in order to establish an unconditional upper bound for $S(E_1,E_2;x)$.
To this end, we shall need the following second version of the square sieve and an unconditional effective 
version of the Chebotarev density theorem, stated below. 

\begin{lemma}[Square sieve, version 2] \label{squaresieve2} 
Let $\mathcal{A}$ be a multiset of positive integers, 
and let $\mathcal{P}$ be a set of $P$ distinct primes. Set
$$
S(\mathcal{A}) := \#\{ \alpha \in  \mathcal{A}\ :\ \alpha \mbox{ is a square}\}.
$$
Then
\begin{equation} \label{newS}
S(\mathcal{A}) \le \frac{\# \mathcal{A}}{P}+\max\limits_{\substack{q_1,q_2\in \mathcal{P}\\ q_1 \neq q_2}} \left|\sum\limits_{\alpha\in \mathcal{A}}
\left(\frac{\alpha}{q_1q_2}\right)\right|+\frac{2}{P}\cdot \sum\limits_{\alpha\in \mathcal{A}} \sum\limits_{\substack{q\in \mathcal{P}\\ 
q|\alpha}} 1 + \frac{1}{P^2}\cdot \sum\limits_{\alpha\in \mathcal{A}} \left(\sum\limits_{\substack{q\in \mathcal{P}\\ 
q|\alpha}} 1\right)^2.
\end{equation}
\end{lemma}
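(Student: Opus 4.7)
The plan is to execute Heath-Brown's square sieve in its standard form, with just enough care that the error from primes $q \in \mathcal{P}$ dividing elements of $\mathcal{A}$ is made explicit rather than absorbed by a size hypothesis (as in Lemma \ref{squaresieve}). I will work with
$$T := \sum_{\alpha \in \mathcal{A}} \left(\sum_{q \in \mathcal{P}} \left(\frac{\alpha}{q}\right)\right)^2, \qquad \omega(\alpha) := \#\{q \in \mathcal{P} : q \mid \alpha\},$$
bounding $T$ from above by opening the square and treating diagonal and off-diagonal terms separately, and from below by restricting the outer sum to squares; combining the two will yield the claim.

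For the lower bound, I will use the fact that when $\alpha$ is a positive square, the Jacobi symbol $\left(\frac{\alpha}{q}\right)$ equals $1$ if $q \nmid \alpha$ and $0$ if $q \mid \alpha$, so the inner sum is exactly $P - \omega(\alpha)$ and therefore $T \ge \sum_{\alpha \text{ square}} (P - \omega(\alpha))^2$. The key arithmetic manipulation is then to write $P = (P - \omega(\alpha)) + \omega(\alpha)$ and expand the square on the left:
$$P^2 = (P - \omega(\alpha))^2 + 2(P - \omega(\alpha))\,\omega(\alpha) + \omega(\alpha)^2 \le (P - \omega(\alpha))^2 + 2P\,\omega(\alpha) + \omega(\alpha)^2.$$
Summing over squares in $\mathcal{A}$ and enlarging the two correction sums to run over all of $\mathcal{A}$ will give
$$P^2 \cdot S(\mathcal{A}) \le T + 2P \sum_{\alpha \in \mathcal{A}} \omega(\alpha) + \sum_{\alpha \in \mathcal{A}} \omega(\alpha)^2.$$

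For the upper bound on $T$, I will expand the square and swap the order of summation to obtain $T = \sum_{q_1, q_2 \in \mathcal{P}} \sum_{\alpha \in \mathcal{A}} \left(\frac{\alpha}{q_1 q_2}\right)$. The diagonal terms $q_1 = q_2$ give Jacobi symbols in $\{0, 1\}$ and contribute at most $P \cdot \#\mathcal{A}$, while each of the $P(P-1) \le P^2$ off-diagonal terms is at most the maximum appearing in the statement. Dividing the resulting upper bound for $T$ by $P^2$ and inserting into the inequality of the previous paragraph will give the asserted conclusion.

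The argument is almost entirely routine; the one point that merits attention is the choice to expand $P^2 = ((P - \omega) + \omega)^2$ rather than to use the cruder inequality $(P - \omega)^2 \ge P^2 - 2P\omega$. The former produces exactly the final correction term $P^{-2}\sum_\alpha \omega(\alpha)^2$, and this term, together with the linear correction $P^{-1}\sum_\alpha \omega(\alpha)$, is precisely what lets the lemma dispense with the hypothesis $\max_\alpha \alpha \le e^P$ present in Lemma \ref{squaresieve}.
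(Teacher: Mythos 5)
The paper does not prove this lemma itself: its ``proof'' is a citation of Theorem~2.1 in \cite{CFM} (which in turn is attributed to \cite{HB}). Your argument is the standard derivation of exactly that result: bounding
$T := \sum_{\alpha}\bigl(\sum_{q\in\mathcal{P}}\left(\tfrac{\alpha}{q}\right)\bigr)^2$
from below by restricting to squares (where the inner sum equals $P-\omega(\alpha)$) and from above by expanding and splitting diagonal from off-diagonal pairs, then reconciling via $P^2 = \bigl((P-\omega)+\omega\bigr)^2 \le (P-\omega)^2 + 2P\omega + \omega^2$. Every step checks: $\omega(\alpha)\le P$ guarantees $P-\omega(\alpha)\ge 0$, so $(P-\omega(\alpha))\omega(\alpha)\le P\,\omega(\alpha)$; enlarging the correction sums from squares to all of $\mathcal{A}$ is harmless; and the diagonal terms contribute at most $P\cdot\#\mathcal{A}$ since $\left(\tfrac{\alpha}{q^2}\right)=\left(\tfrac{\alpha}{q}\right)^2\in\{0,1\}$. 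The only implicit hypothesis you (and the lemma as stated) rely on is that the primes in $\mathcal{P}$ are odd, so that the Jacobi symbol is well-defined and behaves as used; this is the standard convention in \cite{CFM} and is satisfied in the paper's application, where $\mathcal{P}\subset(z/2,z]$ with $z$ large. Your closing observation is also on point: replacing the size hypothesis $\max_\alpha \alpha \le e^P$ of Lemma~\ref{squaresieve} with the explicit correction terms $\frac{2}{P}\sum_\alpha\omega(\alpha)$ and $\frac{1}{P^2}\sum_\alpha\omega(\alpha)^2$ is exactly what makes version~2 usable for the unconditional bound, where $z$ is too small for \eqref{uppercond} to hold.
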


\begin{proof} This is Theorem 2.1. in \cite{CFM} and originated in \cite{HB}. \end{proof} 

\begin{theorem}[Chebotarev density theorem, unconditional] \label{uncond} Let the conditions of Theorem \ref{chebo1} be kept. There exist positive constants
$A$, $b$ and $b'$ such that, if
\begin{equation} \label{cond1}
\log x\ge bn_L(\log |d_L|)^2,
\end{equation}
then 
\begin{equation}
\begin{split}
\pi_C(x,L:\mathbb{Q})= & \frac{\# C}{\# G} \cdot \mbox{\rm li}(x) + O\left(\frac{\# C}{\# G} \cdot \mbox{\rm li}\left(
x\cdot \exp\left(-b'\cdot \frac{\log x}{\mbox{max}\left\{|d_L|^{1/n_L},\log |d_L|\right\}}\right)\right)\right)+\\ &
O\left(\left(\# \tilde{C}\right) x\exp\left(-A\sqrt{\frac{\log x}{n_L}}\right)\right),
\end{split}
\end{equation}
where $\tilde{C}$ is the set of conjugacy classes whose union is $C$. 
\end{theorem}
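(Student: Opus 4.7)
The plan is to follow the Lagarias--Odlyzko derivation of effective Chebotarev from the analytic theory of Artin L-functions attached to $L/\mathbb{Q}$. First I would reduce the statement to sums over characters by writing
\[
\pi_C(x,L,\mathbb{Q}) = \frac{\# C}{\# G}\sum_{\chi} \overline{\chi(g_C)}\sum_{\substack{p\le x\\ p\ \mathrm{unramified}}} \chi(\sigma_p),
\]
with $\chi$ running over irreducible characters of $G$, and then passing to von Mangoldt-type sums $\psi_\chi(x)$, which are accessible through the L-functions $L(s,\chi,L/\mathbb{Q})$.

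Second, since Artin's conjecture is not known in general, I would invoke Brauer induction to express each $L(s,\chi,L/\mathbb{Q})$ as a product of integer (possibly negative) powers of Hecke L-functions on intermediate subfields. These Hecke L-functions admit meromorphic continuation and a functional equation, so for each $\chi$ the Riemann--von Mangoldt explicit formula (truncated at height $T$) gives
\[
\psi_\chi(x) = -\sum_{|\mathrm{Im}\,\rho|\le T}\frac{x^\rho}{\rho} + O\!\left(\frac{x(\log(x|d_L|))^2}{T} + n_L \log x\right),
\]
where $\rho$ ranges over the non-trivial zeros of the associated L-functions.

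Third, I would insert the Lagarias--Odlyzko zero-free region: there is an absolute constant $c>0$ such that $\zeta_L(s)=\prod_\chi L(s,\chi)^{\chi(1)}$ has no zeros in
\[
\sigma \ge 1 - \frac{c}{\log|d_L| + n_L\log(|t|+2)},
\]
except for at most one real exceptional zero $\beta_0$. Summing the explicit formula over $\chi$, bounding the density of zeros slightly inside this region, and choosing $T$ optimally produces the $\exp(-A\sqrt{\log x/n_L})$ error term. The exceptional zero, when present, is controlled by Stark's lower bound $1-\beta_0 \gg |d_L|^{-1/n_L}$ together with the weaker unconditional $1-\beta_0 \gg 1/\log|d_L|$, and it contributes precisely the $\mbox{li}(x\exp(-b'\log x/\max\{|d_L|^{1/n_L},\log|d_L|\}))$ term in the statement.

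The main obstacle will be tracking constants uniformly across all character contributions and across the Brauer induction. One must verify that the factors of $\# C$ and $\# \widetilde{C}$ appear as stated (the latter coming from the trivial bound on the zero contributions, which is a sum over conjugacy classes), that the hypothesis $\log x\ge b\,n_L(\log|d_L|)^2$ is exactly the threshold making the explicit formula dominate the trivial error $O(n_L \log x)$, and that the unsigned powers introduced by Brauer's formula do not spoil uniformity. In practice, since this theorem is a well-known classical result, I would simply cite the Lagarias--Odlyzko paper or Serre's ``Quelques applications du th\'eor\`eme de densit\'e de Chebotarev'' rather than reproducing the entire argument in this note.
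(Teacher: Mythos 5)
Your conclusion to simply cite the literature is exactly what the paper does: its entire proof reads ``This is Theorem 2.4 in \cite{CFM},'' which in turn traces back to the Lagarias--Odlyzko and Serre results you name. The preceding sketch you give (character decomposition, Brauer induction, explicit formula with zero-free region and Stark's bound on the exceptional zero) is a correct description of the underlying argument and matches the source being cited, so the proposal is in full agreement with the paper's approach.
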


\begin{proof}
This is Theorem 2.4 in \cite{CFM}.
\end{proof}

Lemma \ref{squaresieve2} should be compared to Lemma \ref{squaresieve}. We note that in Lemma \ref{squaresieve2}, the 
condition \eqref{uppercond} is omitted, which shall turn out to be essential for us in order to 
obtain an unconditional bound. The costs of omitting this condition are two extra terms on the right-hand side of 
\eqref{newS}, as compared to \eqref{oldS}. We shall be able to estimate these terms quite easily using some results in 
\cite{CFM}. 

Applying Lemma \ref{squaresieve2} in our situation, we now get
\begin{equation*} 
\begin{split}
 S(E_1,E_2;x)\ll & 1+\frac{1}{P}\cdot  (\pi(x)-\omega(N)) +\\ & \frac{1}{P^{2}} \cdot \sum\limits_{\substack{q_1,q_2\in \mathcal{P}\\ q_1 \neq q_2}} \left|\sum\limits_{\substack{p\le x\\
 p\ndivides q_1q_2N}} \left(\frac{(4p-a_p^2)(4p-b_p^2)}{q_1q_2}\right)\right| +\\
 & \frac{2}{P}\cdot \sum\limits_{p\le x} \sum\limits_{\substack{q\in \mathcal{P}\\ 
q|(4p-a_p^2)(4p-b_p^2)}} 1 + \frac{1}{P^2}\cdot \sum\limits_{p\le x} \left(\sum\limits_{\substack{q\in \mathcal{P}\\ 
q|(4p-a_p^2)(4p-b_p^2)}} 1\right)^2
 \end{split}
 \end{equation*}
with $P$ as defined in \eqref{Pdef}.  
To estimate the last two terms on the right-hand side of \eqref{newbound}, we write
$$
\sum\limits_{p\le x} \sum\limits_{\substack{q\in \mathcal{P}\\ q|(4p-a_p^2)(4p-b_p^2)}} 1 \le 
\sum\limits_{p\le x} \sum\limits_{\substack{q\in \mathcal{P}\\ q|(4p-a_p^2)}} 1 + 
\sum\limits_{p\le x} \sum\limits_{\substack{q\in \mathcal{P}\\ q|(4p-b_p^2)}} 1
$$
and 
$$
\sum\limits_{p\le x} \left(\sum\limits_{\substack{q\in \mathcal{P}\\ q|(4p-a_p^2)(4p-b_p^2)}} 1\right)^2 \le 
2\sum\limits_{p\le x} \left(\sum\limits_{\substack{q\in \mathcal{P}\\ q|(4p-a_p^2)}} 1\right)^2 + 
2\sum\limits_{p\le x} \left(\sum\limits_{\substack{q\in \mathcal{P}\\ q|(4p-b_p^2)}} 1\right)^2.
$$
The bounds $(25)$ and $(26)$ from \cite{CFM} imply that
$$
\sum\limits_{p\le x} \sum\limits_{\substack{q\in \mathcal{P}\\ q|D(4p-a_p^2)}} 1 + 
\sum\limits_{p\le x} \sum\limits_{\substack{q\in \mathcal{P}\\ q|D(4p-b_p^2)}} 1 \ll \frac{x}{\log x}
$$
and 
$$
2\sum\limits_{p\le x} \left(\sum\limits_{\substack{q\in \mathcal{P}\\ q|D(4p-a_p^2)}} 1\right)^2 + 
2\sum\limits_{p\le x} \left(\sum\limits_{\substack{q\in \mathcal{P}\\ q|D(4p-b_p^2)}} 1\right)^2 \ll 
\frac{x}{\log x}
$$
for any non-square positive integer $D$, provided that $z\le x^{1/2}$. In particular, this holds for $D=2$. Since all primes $q$ in the above
two bounds are odd if $z>2$, these bounds remain true if we set $D=1$ in this case.  Using \eqref{primenumber} in addition, it follows that
\begin{equation} \label{newbound}
S(E_1,E_2;x)\ll \frac{x/\log x}{z/\log z}+\frac{1}{P^{2}} \cdot \sum\limits_{\substack{q_1,q_2\in \mathcal{P}\\ q_1 \neq q_2}} \left|\sum\limits_{\substack{p\le x\\
 p \ndivides | q_1 q_2 N}} \left(\frac{(4p-a_p^2)(4p-b_p^2)}{q_1q_2}\right)\right|
\end{equation}
if $z>2$.

Again we shall choose $z$ depending on $x$ so that $z$ exceeds every given real number if $x$ is 
large enough and hence
\eqref{iso} holds for $z/2<q_1,q_2\le z$. In the following, we assume that we are in this situation.
The second term on the right-hand side of \eqref{newbound} is now treated
as in the previous sections, but here we apply Theorem \ref{uncond} in place of Theorem 
\ref{chebo1}.  Using the estimates in Lemma \ref{ndL}, the equation \eqref{sum} and $\# \tilde{C}\le \# C$, we arrive at 
\begin{equation*}
 S(E_1,E_2;x)\ll \frac{x/\log x}{z/\log z} + \frac{x}{\log x}\cdot 
\exp\left(-c_1\cdot \frac{\log x}{z^{16}}\right)+ 
z^{14} x\exp\left(-A\sqrt{\frac{\log x}{z^{14}}}\right)
 \end{equation*}
for some constants $c_1>0$ and $A>0$, provided that
 \begin{equation} \label{cond2}
 \log x\ge c_2z^{42}(\log z)^2
\end{equation}
 for some constant $c_2>0$. This condition comes from condition \eqref{cond1} and Lemma \ref{ndL}. Choosing
\begin{equation} \label{zc}
z:=c_3(\log x)^{1/42}(\log\log x)^{-1/21}
\end{equation}
for some constant $c_3>0$ which is small enough so that \eqref{zc} is 
consistent with \eqref{cond2}, we obtain Theorem \ref{thmunconditional}.

\begin{remark}
We note that the choice of $z$ in \eqref{zc} would contradict condition \eqref{uppercond} if we had used
Lemma \ref{squaresieve} instead of Lemma \ref{squaresieve2}. This explains why we work with version 2
of the square sieve in this situation.
\end{remark}

\begin{acknowledgement}
We thank the referees for their valuable suggestions and for pointing us to the recent preprint by Akbary and Park \cite{AP}. 
We also thank A. Akbary for a careful reading of the introductory part of this paper and for several helpful comments. 
\end{acknowledgement}

\end{document}